\newtheorem{theorem}{Theorem}[section]
\newtheorem{lemma}[theorem]{Lemma}
\newtheorem{prop}[theorem]{Proposition}
\title{Claw-free minimal matching covered graphs}
\author{{\small\bf Yipei Zhang$^{1}$, Xiumei Wang$^{2}$\thanks{Corresponding author. Email address: wangxiumei@zzu.edu.cn}, Jinjiang Yuan$^{2}$, C.T. Ng$^{3}$, T.C.E. Cheng$^{3}$}\\
{\small $^{1}$School of Mathematics and Statistics, North China University of Water Resources and Electric Power,}\\
{\small Zhengzhou, Henan 450046, People's Republic of China}\\
{\small $^{2}$School of Mathematics and Statistics, Zhengzhou University,}\\
{\small Zhengzhou, Henan 450001, People's Republic of China}\\
{\small $^{3}$Logistics Research Centre, Department of Logistics and Maritime Studies,}\\
{\small The Hong Kong Polytechnic University, Hong Kong SAR, People's Republic of China}}
\date{}
\begin{document}
\maketitle

\begin{abstract}
A matching covered graph $G$ is minimal if for each edge $e$ of $G$, $G-e$ is not matching covered. An edge $e$ of a matching covered graph $G$ is removable if $G-e$ is also matching covered. Thus a matching covered graph is minimal if and only if it is free of removable edges.
For bipartite graphs, Lov\'{a}sz and Plummer gave a characterization of bipartite minimal matching covered graphs.
%$G$ is minimal matching covered if and only if no nice cycle of $G$ has a chord.
For bricks, Lov\'{a}sz showed that the only bricks that are minimal matching covered are $K_4$ and $\overline{C_6}$.
In this paper, we present a complete characterization of minimal matching covered graphs that are claw-free.
Moreover, for cubic claw-free matching covered graphs that are not minimal matching covered, we obtain the number of their removable edges (with respect to their bricks), and then prove that they have at least 12 removable edges (the bound is sharp).
\end{abstract}

\noindent{\bf Keywords:} minimal matching covered graph; claw-free; cubic; removable edge  \\

\section{Introduction}
All the graphs considered in this paper may have multiple edges (except those considered in Theorem \ref{2-connected-cubic-claw-free} and Section 4), but no loops. We follow \cite{BM08} for undefined notation and terminology.
Let $G$ be a graph with the vertex set $V(G)$ and edge set $E(G)$.
For a subset $X$ of $V(G)$, we denote by $G[X]$ the subgraph of $G$ induced by $X$.
We denote by $K_n$ the complete graph with $n$ vertices.
An edge $e$ of $G$ is \emph{admissible} if $G$ has a perfect matching that contains $e$; otherwise, it is \emph{inadmissible}.
A connected nontrivial graph $G$ is \emph{matching covered} if each edge of $G$ is admissible. Clearly, every matching covered graph different from $K_2$ is 2-connected.
We say that a matching covered graph $G$ is {\it minimal} if for each edge $e$ of $G$, $G-e$ is not matching covered.
Let $G$ and $H$ be two graphs. We say that $G$ is $H$-{\it free} if $G$ does not contain $H$ as an induced subgraph. In particular, a graph $G$ is called {\it claw-free} if it is $K_{1,3}$-free.

For a nonempty proper subset $X$ of $V(G)$, we denote by $\partial(X)$ the set of all the edges of $G$ with one end in $X$ and the other end in $\overline{X}$, where $\overline{X}=V(G)\setminus X$. We refer to $\partial(X)$ as a {\it cut} of $G$.
A cut $\partial(X)$ is {\it trivial} if one of $X$ and $\overline{X}$ has exactly one vertex, and is \emph{nontrivial} otherwise.
A cut $\partial(X)$ is a {\it k-cut} if $|\partial(X)|=k$.
For a cut $C=\partial(X)$ of a graph $G$, we denote by
$G\{X,\overline{x}\}$, or simply $G\{X\}$, the graph obtained from $G$ by shrinking $\overline{X}$ to a single vertex $\overline{x}$.
The graph $G\{\overline{X},x\}$ or simply $G\{\overline{X}\}$, is defined analogously.
The two graphs $G\{X\}$ and $G\{\overline{X}\}$ are called the two $C$-{\it contractions} of $G$.

Let $G$ be a matching covered graph. A cut $C$ is a \emph{tight cut} if $|M\cap C|=1$ for each perfect matching $M$ of $G$. A matching covered graph that is free of nontrivial tight cuts is a \emph{brace} if it is bipartite, and is a \emph{brick} otherwise.
If $C$ is a nontrivial tight cut of $G$, then the two $C$-contractions of $G$ are also matching covered.
Continuing  in this way, we can obtain a list of graphs without nontrivial tight cuts, which are bricks and braces.
This procedure is known as a {\it tight cut decomposition} of $G$.  In general, a matching covered graph may admit several tight cut decompositions.  Lov\'asz \cite{Lovasz87} showed that any two tight cut decompositions of a matching covered graph yield the same list of bricks and braces (up to multiple edges).

A graph $G$ with at least four vertices is {\it bicritical} if, for any two distinct vertices $u$ and $v$ of $G$, the subgraph $G-\{u,v\}$ has a perfect matching. For each bicritical graph $G$, one may easily verify that $\delta(G)\geq3$ and $G$ is matching covered. Edmonds et al. \cite{ELP82} showed that a graph is a brick if and only if it is 3-connected and bicritical.

\begin{figure}[h]
 \centering
 \includegraphics[width=0.55\textwidth]{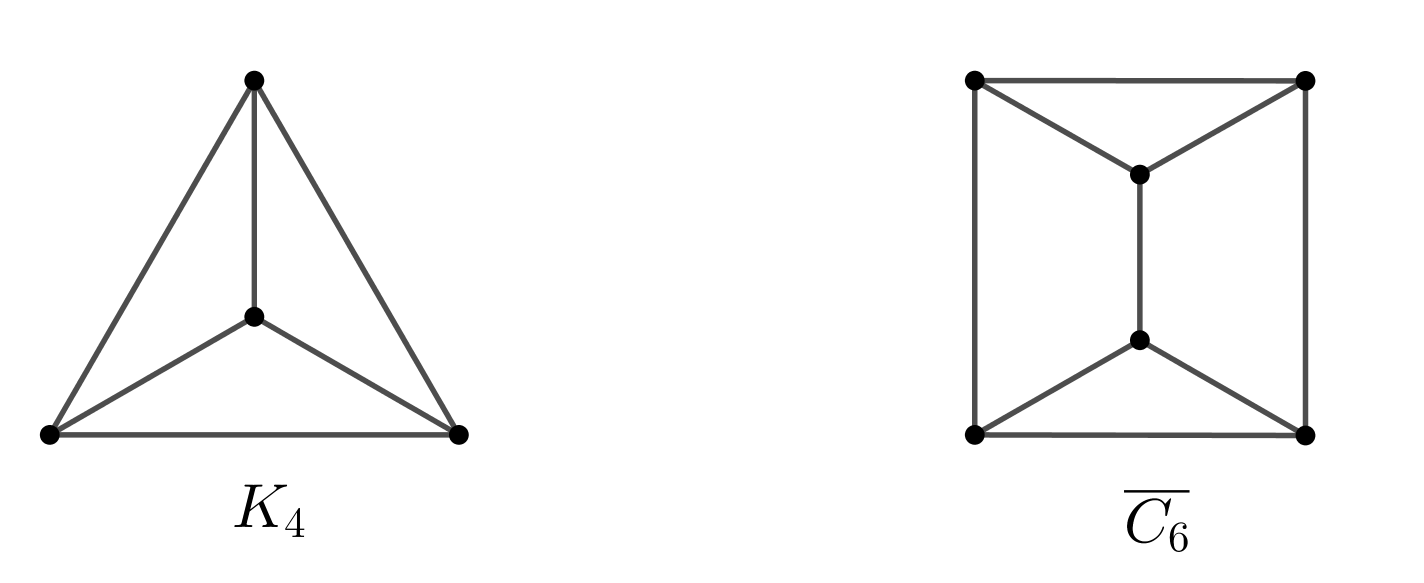}\\
 \caption{The two bricks.}\label{fig1}
\end{figure}

An edge $e$ of a matching covered graph $G$ is \emph{removable} if $G-e$ is also matching covered, and is \emph{nonremovable} otherwise.
The notion of removable edge arises in connection with ear decompositions of
matching covered graphs introduced by Lov\'{a}sz and Plummer.
Clearly,  each multiple edge of a matching covered graph is in fact a removable edge.
Note that a matching covered graph is minimal if and only if it is free of removable edges.
For a bipartite  graph $G$,  Lov\'{a}sz and Plummer \cite{LP1977} proved that $G$ is minimal matching covered if and only if no nice cycle of $G$ has a chord.
For bricks, Lov\'asz \cite{Lovasz87} showed that
the only bricks that are minimal matching covered are $K_4$ and $\overline{C_6}$, which are depicted in Figure \ref{fig1}.
Zhang et al. \cite{ZYP22} presented a characterization of minimal matching covered graphs which are bicritical. Using this characterization, we can obtain all the minimal matching covered graphs which are claw-free and bicritical (Theorem \ref{claw-free-bicriticalR(G)emptyset}).
In this paper, we give a complete characterization of minimal matching covered graphs which are claw-free.

\begin{theorem}[\cite{ZYP22}] \label{claw-free-bicriticalR(G)emptyset}
A graph $G$ is a minimal matching covered graph which is claw-free and bicritical if and only if $G\in \mathcal{G}$. 
\end{theorem}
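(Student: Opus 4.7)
The plan is to derive this theorem as a consequence of the ZYP22 characterization of minimal matching covered bicritical graphs; by definition $\mathcal{G}$ is the claw-free subfamily inside the ZYP22 list, so the content of the theorem is that the claw-free property cuts the ZYP22 list down to exactly $\mathcal{G}$. The sufficiency direction is then a per-building-block verification: the ZYP22 description already supplies bicriticality and minimality for every construction in the list, so one needs only inspect the neighborhoods of the vertices in each member of $\mathcal{G}$ to confirm claw-freeness.

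For the necessity direction, take a claw-free, bicritical, minimal matching covered graph $G$. If $G$ is $3$-connected, then $G$ is a brick, and by the Lov\'asz result cited in the introduction $G\in\{K_4,\overline{C_6}\}$; both are claw-free (for $\overline{C_6}$, observe that the three neighbors of any vertex always contain an edge rather than forming an independent triple), and hence both lie in $\mathcal{G}$. If $G$ is not $3$-connected, then since bicriticality forces $\delta(G)\geq 3$, $G$ must admit a nontrivial tight cut, and the ZYP22 characterization expresses $G$ as a tight-cut composition of smaller minimal matching covered bicritical pieces whose ultimate bricks in the tight cut decomposition are again forced to be $K_4$ or $\overline{C_6}$.

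The main obstacle is the combinatorial case analysis at these gluing steps: I would enumerate the possible pairs of vertices at which two such bricks can be identified in the tight-cut composition, and for each candidate verify whether any three neighbors of some vertex in the composed graph form an independent set, producing an induced $K_{1,3}$. Since the claw-free condition is local at each vertex, it sharply restricts the admissible gluings, especially around the contraction vertices where new edges aggregate from both $C$-contractions; the surviving configurations are precisely the members of $\mathcal{G}$. Once this enumeration has been carried out over all gluings of copies of $K_4$ and $\overline{C_6}$ (including heterogeneous combinations), both directions of the theorem are established.
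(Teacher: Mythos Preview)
The paper does not actually prove Theorem~\ref{claw-free-bicriticalR(G)emptyset}; it is imported from \cite{ZYP22}, and the only remark in this paper is that it follows from the characterization of minimal matching covered bicritical graphs given there. So there is no proof here to compare your attempt against beyond that single sentence.

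That said, your high-level plan --- take the ZYP22 list and filter by claw-freeness --- is exactly what the paper says one should do. What is missing from your proposal is the substance of that filtering. You never state the ZYP22 characterization, so your claim that ``by definition $\mathcal{G}$ is the claw-free subfamily inside the ZYP22 list'' is an assertion, not an argument: $\mathcal{G}$ is defined here independently (via ridge replacements by $K_4^-$ and $\overline{C_6}^{*}$), and matching it to the claw-free members of the ZYP22 family is precisely the work to be done. Your inductive step also assumes that the $C$-contractions of a bicritical minimal matching covered graph are again bicritical and minimal; minimality of the pieces is not automatic (removability can change under contraction, cf.\ Lemma~\ref{removable-tight}), and you would need the specific structure from \cite{ZYP22} to justify it. Finally, the closing ``once this enumeration has been carried out'' is where the whole argument lives --- identifying which gluings of $K_4$ and $\overline{C_6}$ avoid creating a claw at the contraction vertex is exactly what singles out ridge replacement, and you have not done it. In short: right strategy, but the proof is still to be written and cannot be completed without the actual content of \cite{ZYP22}.
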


To describe the graphs in $\mathcal{G}$, we first give some definitions. An edge of a graph  that does not lie in a triangle is called a {\it ridge}. For convenience, we say that each edge of $K_4$ is a ridge.
Let $K_4^{-}$ and $\overline{C_6}^{*}$ be the graphs obtained from $K_4$ and $\overline{C_6}$ by deleting a ridge, respectively.
Note that each of $K_4^{-}$ and $\overline{C_6}^{*}$ has exactly two vertices of degree 2.
To replace an edge $u_1u_2$ of a graph $G$ by $K_4^{-}$ or $\overline{C_6}^{*}$ with two vertices of degree 2, say $v_1$ and $v_2$, is to delete the edge $u_1u_2$ from $G$ and then identify $u_i$ and $v_i$ into a new vertex, $i=1,2$.

Let $\mathcal{G}$ be the set of graphs obtained from $K_4$ or $\overline{C_6}$ by  recursively  replacing some ridges by  $K_4^{-}$ or $\overline{C_6}^{*}$.
Note that $K_4$, $\overline{C_6}\in \mathcal{G}$,
and the two graphs shown in Figure \ref{fig2} belong to $\mathcal{G}$.
Moreover, if $G\in\mathcal{G}$ and $x\in V(G)$, then (i) $G$ is either $K_4$-free or isomorphic to $K_4$,  (ii) $d(x)=3$ or 4, and (iii) if $d(x)=4$, then $G[N(x)]$ is the union  of two disjoint $K_2$s.
Let $\mathcal{F}$ be the set of graphs that lie in $\mathcal{G}$ or are compounds (defined in Section 2) of two graphs in  $\mathcal{F}$.

\begin{figure}[h]
 \centering
 \includegraphics[width=0.6\textwidth]{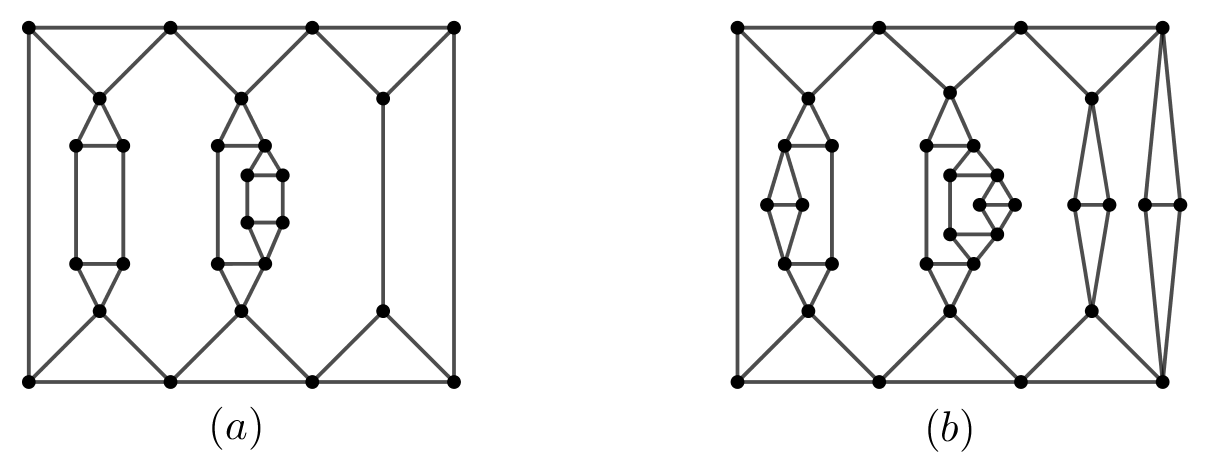}
 \caption{The two graphs in  $\mathcal{G}.$}\label{fig2}
\end{figure}

A path is odd or even according to the parity of its length.
Let $H$ be a graph. {\it To bisubdivide an edge $e$} of $H$ is to replace $e$ by an odd path with length at least three. %$P_e$
The resulting graph is called a {\it  bisubdivision} of $H$ at the edge $e$.
A vertex of $H$ is {\it bisimplicial} if its neighbours induce  two disjoint cliques such that each clique has at least two vertices.
{\it To expand  a bisimplicial vertex $u$} of $H$ is to (i) split $u$ into two vertices $u'$ and $u''$, which are called {\it split vertices},  (ii) join $u'$ to each vertex of one clique in $H[N(u)]$ and $u''$ to each vertex of the other clique, and (iii) add an even path with length at least two that connects  $u'$ and $u''$.
A graph is called a  {\it $u$-splitting} of $H$ at a bisimplicial vertex $u$ if it is obtained from $H$ only by operations (i) and (ii).
A graph is called an {\it expansion of $H$} if it is obtained from $H$ by recursively expanding bisimplicial vertices or bisubdividing  ridges.
For convenience, we say that $H$ is  an expansion of itself.
Often  we may use subscripts  to denote a vertex, e.g., $u_1$.
In this case, the two  split  vertices obtained from $u_1$ will be decorated similarly, that is, $u_1'$ and $u_1''$.
The following theorem gives a complete characterization of minimal matching covered graphs that are claw-free. %the main result of this paper,

\begin{theorem} \label{claw-free-mc}
A graph $G$ different from $K_2$ and even cycle is a claw-free minimal matching covered graph if and only if $G$ is an expansion of a graph in $\mathcal{F}$.
%¼ÓÉÏÁËżȦ.
\end{theorem}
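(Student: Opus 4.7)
The plan is to prove both implications by induction on $|V(G)|$, with the non-bicritical case handled via the tight cut decomposition and the bicritical case reducing immediately to Theorem \ref{claw-free-bicriticalR(G)emptyset}.

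\emph{Sufficiency.} First I would verify the base case that every graph in $\mathcal{F}$ is a claw-free minimal matching covered graph: graphs in $\mathcal{G}$ satisfy this by Theorem \ref{claw-free-bicriticalR(G)emptyset}, and for compounds along a tight cut one uses the standard property that removability is local to the two $C$-contractions, together with a local analysis at the contracted vertex to rule out any induced $K_{1,3}$. Then I would show that each expansion step preserves both properties. For bisubdivision of a ridge $uv$ into an odd path, the interior vertices have degree $2$, so they cannot be the centre of a claw, and since $uv$ lay in no triangle no new claw appears at $u$ or $v$; minimality is preserved because an alternating-path argument shows every new edge is non-removable while the removable/non-removable status of every old edge is unchanged. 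For expanding a bisimplicial vertex $u$ with $N(u)$ a disjoint union of cliques $K^{(1)}\cup K^{(2)}$, the two cliques remain intact so no induced claw is created, and a matching-flip argument shows every edge on the inserted even $u'u''$-path is non-removable while every other edge keeps its original status.

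\emph{Necessity.} Let $G$ be a claw-free minimal matching covered graph, different from $K_2$ and an even cycle. If $G$ is bicritical, then $G\in\mathcal{G}\subseteq\mathcal{F}$ by Theorem \ref{claw-free-bicriticalR(G)emptyset} and $G$ is the trivial expansion of itself. Otherwise I would split by minimum degree. If $\delta(G)=2$, pick a vertex $v$ of degree $2$ and trace the maximal induced chain of degree-$2$ vertices through $v$; claw-freeness forces this chain to be an induced path, and matching-coveredness with minimality force it to have odd length with both endpoints of degree at least $3$. Contracting the chain to a single edge $e$ yields a smaller claw-free minimal matching covered graph $G'$ in which $e$ is a ridge, so the inductive hypothesis exhibits $G'$ as an expansion of some graph in $\mathcal{F}$, and hence so is $G$. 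If $\delta(G)\ge 3$ and $G$ is not bicritical, then $G$ admits a nontrivial tight cut $\partial(X)$; I would analyse its shore using claw-freeness to identify one of two situations: either $\partial(X)$ is a $3$-cut whose two $C$-contractions are both claw-free minimal matching covered, giving $G$ as a compound of two graphs in $\mathcal{F}$; or else a pair of vertices $u',u''$ in $X$ together with an even $u'u''$-path inside $X$ reveals $G$ as the expansion of a smaller graph at a bisimplicial vertex $u$ obtained by identifying $u'$ with $u''$.

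\emph{Main obstacle.} The hardest step is the last one: the case $\delta(G)\ge 3$ with $G$ not bicritical. There I must show that every nontrivial tight cut in a claw-free minimal matching covered graph either fits a compound structure from $\mathcal{F}$ or unmasks a bisimplicial vertex hidden inside a vertex-expansion. This requires a careful local analysis of tight cuts under claw-freeness: one has to show that a tight $3$-cut $\partial(X)$ that is not resolvable as a compound corresponds exactly to a splitting of a bisimplicial vertex, i.e.\ that the two $X$-side endpoints of $\partial(X)$ together with their shore neighbourhoods form two disjoint cliques joined inside $X$ by an even path. Making this rigorous without appealing to the final characterization is the technical core of the proof, which I would approach by induction on $|X|$ using that both $C$-contractions remain matching covered and inherit minimality from $G$, combined with a claw-free analysis at each contracted vertex.
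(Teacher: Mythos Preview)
Your proposal contains a genuine gap in the $\delta(G)=2$ branch. You assert that ``matching-coveredness with minimality force [the maximal degree-$2$ chain] to have odd length'', and then contract it to a single edge. This is false: the chain can have even length, and this case is exactly what bisimplicial-vertex expansion produces. Any graph in $\mathcal{F}$ with a degree-$4$ vertex $x$ (these exist; by Lemma~\ref{4-degree-2K2} such an $x$ has $G[N(x)]\cong 2K_2$) admits an expansion at $x$, and the resulting graph has a maximal degree-$2$ path of even length between the split vertices $x',x''$. In the paper this is handled by identifying the two endpoints $u_1,u_k$ of an even chain into a single vertex $x^*$, proving $x^*$ is bisimplicial, and invoking Lemma~\ref{vertex-expansion}. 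You have no analogue of this, and your later remark about ``an even $u'u''$-path inside $X$'' in the $\delta(G)\ge 3$ case is misplaced: when $\delta(G)\ge 3$ there are no degree-$2$ vertices, so no such path can exist there.

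A secondary issue is your plan for $\delta(G)\ge 3$, non-bicritical. You propose to take an arbitrary nontrivial tight cut and argue it is a $3$-cut or reveals a bisimplicial vertex. Neither alternative is right in general: the relevant barrier cuts $\partial(V(L_i))$ can have size up to $4$ (each of $u,v$ may have two neighbours in $L_i$), and all four compound operations (V-joining, V-attaching, E-joining, EV-attaching) are needed to cover the cases. The paper proceeds by first using claw-freeness to guarantee a $2$-barrier $\{u,v\}$ (Lemma~\ref{clawfreebicritical-no2barrier}), then carefully analysing $N(u)\cap V(L_i)$ and $N(v)\cap V(L_i)$ as disjoint cliques (Claim~1), and finally matching the configuration to one of the four operations. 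Even in the odd-chain case you do treat, showing that the contracted edge $e$ is a ridge of $G'$ is not automatic: the paper needs the structural constraints of Lemma~\ref{4-degree-2K2} (applied to $G'$ via the induction hypothesis) together with Lemma~\ref{K5freeK4}(ii) to rule out $e\in RE(G')$ and to exclude a common neighbour of its endpoints.
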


Note that for a cubic claw-free matching covered graph $G$, if $G$ is not minimal matching covered, then $G$ has removable edges. In fact, we can prove the following stronger result.
Since each multiple edge of a matching covered graph is a removable edge, we only consider simple graphs in Theorem \ref{2-connected-cubic-claw-free}. To describe Theorem \ref{2-connected-cubic-claw-free}, we give some notation.
For a matching covered graph $G$, we denote by $RE(G)$  the set of all the removable edges of $G$, and by $b^{*}(G)$  the number of the bricks of $G$ each of which is different from $K_4$ and $\overline{C_6}$.
If $b^{*}(G)\geq1$, the number of the vertices of the $b^{*}(G)$ bricks is denoted by $n_1, n_2,\ldots, n_{b^{*}(G)}$, respectively.

\begin{theorem}\label{2-connected-cubic-claw-free}
Let $G$ be a cubic claw-free matching covered graph. If $G$ is not minimal matching covered, then the following statements hold.
\vspace{-8pt}
\begin{enumerate}[(\romannumeral1)]
\setlength{\itemsep}{-1ex}
\item  $b^{*}(G)\geq1$ and $|RE(G)|=\sum_{i=1}^{b^{*}(G)}n_i$.
\item  $G$ has at least 12 removable edges, and the bound is sharp.
\end{enumerate}
\end{theorem}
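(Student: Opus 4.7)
The plan is to apply the tight cut decomposition of $G$ and to analyse, brick by brick, how removable edges of the bricks lift to removable edges of $G$. Since $G$ is cubic and matching covered, every tight cut of $G$ is a $3$-cut, and consequently every brick appearing in the decomposition is again cubic. By Lov\'asz's theorem (cited in the introduction), the only cubic bricks that are minimal matching covered are $K_4$ and $\overline{C_6}$, so every other cubic brick contains at least one removable edge.

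For part (i), I would combine the standard correspondence between removable edges of a matching covered graph and those of the bricks of its decomposition with a structural lemma of the following form: in a cubic brick $B$ different from $K_4$ and $\overline{C_6}$ that arises in the tight cut decomposition of a cubic claw-free matching covered graph, exactly $|V(B)|$ edges lift to removable edges of $G$. The bricks isomorphic to $K_4$ or $\overline{C_6}$ contribute $0$ removable edges (by their minimality), and the non-liftable edges of the other bricks are either incident to shrunk vertices or are forbidden from being removable in $G$ for structural reasons imposed by the surrounding tight cuts. Granting this lemma, the identity $|RE(G)| = \sum_{i=1}^{b^{*}(G)} n_i$ is immediate. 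Since $G$ is assumed not to be minimal, $RE(G) \neq \emptyset$, forcing $b^{*}(G) \geq 1$.

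For part (ii), it suffices, by part (i), to prove $\sum n_i \geq 12$, and I would reduce this to the claim that every cubic brick $B_i$ arising in the tight cut decomposition of a claw-free cubic matching covered graph has at least $12$ vertices unless it is $K_4$ or $\overline{C_6}$. The claw-free hypothesis on $G$ is the essential input: at any non-shrunk vertex $v$ of $B_i$, the three neighbours of $v$ in $B_i$ coincide with its neighbours in $G$, so they cannot form an independent set; hence $v$ lies on a triangle of $B_i$. This triangle-richness excludes every cubic brick of girth at least four, in particular the Petersen graph on $10$ vertices; a finite case analysis then eliminates the remaining cubic bricks of order less than $12$. Sharpness would be established by exhibiting an explicit cubic claw-free brick on $12$ vertices (for instance the truncated tetrahedron, whose vertex neighbourhoods each contain a triangle edge) as a choice of $G$ realising $|RE(G)| = 12$.

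The main obstacle is the structural lemma underlying part (i). The existence of a removable edge in any cubic brick distinct from $K_4$ and $\overline{C_6}$ is classical, but pinning the contribution at precisely $n_i$ (equivalently, at two thirds of $|E(B_i)|$) requires a delicate analysis of how tight cut contractions interact with the removability test, and of exactly which edges incident to shrunk vertices fail to be removable in $G$. The lower bound of $12$ in part (ii) is the second nontrivial point: the exclusion of every cubic brick of order less than $12$ under the claw-free hypothesis is a finite but intricate combinatorial task, which I expect to carry out by showing that any such small brick would, on reconstructing $G$ from the decomposition, force an induced $K_{1,3}$ at some vertex of $G$.
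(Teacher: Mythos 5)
Your skeleton is the same as the paper's --- decompose $G$ along cuts coming from the brick decomposition, count removable edges brick by brick, use claw-freeness to force triangles, and take the truncated tetrahedron for sharpness --- but the proposal reduces the theorem to exactly its two hard points and proves neither, so as it stands there is a genuine gap. For (i), your ``structural lemma'' (each cubic brick $B\neq K_4,\overline{C_6}$ in the decomposition contributes exactly $|V(B)|$ removable edges, all of which lift to $G$) is the entire content of the paper's Lemma \ref{3-connected-cubic-claw-free} combined with Lemmas \ref{cubic-ridge-nonremovable} and \ref{four-operation-property}, and it does not follow from the classical existence of a removable edge in a non-minimal brick. The paper proves it thus: in a $3$-connected cubic claw-free graph other than $K_4$ every vertex lies in \emph{exactly} one triangle (a vertex on two triangles would create a $2$-vertex cut), a maximal-barrier argument with the degree count $|\partial_G(B)|=3|B|-2$ shows every triangle edge is removable, and ridges are nonremovable by Lemma \ref{K5freeK4}(ii); hence $RE(B)$ is precisely the set of triangle edges and $|RE(B)|=|V(B)|$. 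For the lifting, the paper avoids a general tight cut decomposition: if $G$ is not a brick it is not bicritical (Lemma \ref{cubic-bicritical-brick}), so it has a $2$-barrier $\{u,v\}$; both barrier cuts are $3$-cuts because $d(u)+d(v)=6$, the two singleton-neighbour cut edges are ridges of the contractions, so $G$ is an E-joining of $G_1$ and $G_2$ at two ridges, and Lemma \ref{four-operation-property}(ii)(b) together with Lemma \ref{cubic-ridge-nonremovable} yields $RE(G)=RE(G_1)\cup RE(G_2)$ as a disjoint union; induction finishes. Two of your side remarks also need repair: triangle edges incident to the contracted vertices \emph{do} lift (e.g.\ $w_1u_1$ is the edge $uu_1$ of $G$); the only edges lost in the correspondence are the two ridges deleted in the E-joining, which are nonremovable anyway. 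And your opening claim that every tight cut of a cubic matching covered graph is a $3$-cut is true but itself requires an argument (say, via the uniform $\tfrac13$ fractional perfect matching); the paper never needs it, since it only uses the barrier cuts just described.

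For (ii), your plan --- exclude girth-$\geq 4$ bricks and then run a finite case analysis over cubic bricks of order less than $12$ --- has a hole and an omission. The hole: your neighbourhood argument puts only the \emph{non-shrunk} vertices of a brick on triangles, whereas the counting needs all of them; this is supplied by the fact that the contractions are claw-free at the contracted vertices (Lemma \ref{four-operation-property}(i), using claw-freeness of $G$ at $u$ and $v$). The omission: the case analysis over cubic bricks on $6$, $8$ and $10$ vertices is never carried out, and it is unnecessary --- once every vertex of the claw-free cubic brick $H\neq K_4$ lies in exactly one triangle, the triangles partition $V(H)$, so $3$ divides $|V(H)|$; since $|V(H)|$ is even and $H\neq\overline{C_6}$, immediately $|V(H)|\geq 12$. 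Your sharpness witness is exactly the paper's $H_1$ (and the paper additionally E-joins it with arbitrary cubic members of $\mathcal{F}$ to exhibit infinitely many extremal graphs), but note that certifying $|RE(H_1)|=12$ again requires the unproved counting lemma. In short: right route, correct reduction, but the two decisive lemmas are admitted rather than proved.
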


We organize the rest of the paper as follows. In Section 2, we present some basic results and operations. In Sections 3 and 4, we prove Theorems \ref{claw-free-mc} and \ref{2-connected-cubic-claw-free}, respectively.

\section{Preliminaries}

\subsection{Basic results}

\begin{lemma}[\cite{Tutte47}]\label{Tutte}
A graph $G$ has a perfect matching if and only if $o(G-S)\leq|S|$ for every $S\subseteq V(G)$, where $o(G-S)$ denotes the number of odd components of $G-S$.
\end{lemma}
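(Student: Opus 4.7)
The plan is to prove both directions of Tutte's theorem, with the forward direction being essentially immediate and the reverse direction requiring a maximality argument.

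For necessity, I would suppose $G$ has a perfect matching $M$ and fix any $S\subseteq V(G)$. Each odd component $C$ of $G-S$ has odd order, so $M$ cannot pair up all vertices of $C$ internally; hence at least one vertex of $C$ must be matched by $M$ to some vertex in $S$. Since $M$ is a matching, the vertices of $S$ absorbed in this way are all distinct, giving $o(G-S)\le|S|$ immediately.

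For sufficiency I would argue by contradiction. Assume $G$ satisfies Tutte's condition yet has no perfect matching; note in particular that $|V(G)|$ is even (take $S=\varnothing$). Adding an edge to $G$ cannot increase $o(G-S)$ for any $S$, so the condition is preserved under edge additions. Thus I may replace $G$ by a maximal counterexample $G^{*}$ on the same vertex set: $G^{*}$ has no perfect matching, but $G^{*}+e$ does for every non-edge $e$. Let $S$ be the set of vertices of $G^{*}$ adjacent to every other vertex. The plan is to show $G^{*}-S$ is a disjoint union of complete graphs; then the even cliques can be matched internally, each odd clique contributes one unmatched vertex to be absorbed by $S$, and Tutte's condition combined with (the easy form of) Hall's theorem provides the required system of distinct representatives in $S$, yielding a perfect matching of $G^{*}$ and a contradiction.

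The main obstacle is verifying that $G^{*}-S$ is a disjoint union of cliques. I would assume otherwise and pick a shortest induced path $a,b,c$ of length two in $G^{*}-S$, so $ab,bc\in E(G^{*})$ but $ac\notin E(G^{*})$. Since $b\notin S$, there exists $d\in V(G^{*})$ with $bd\notin E(G^{*})$. By maximality, $G^{*}+ac$ has a perfect matching $M_1$ using $ac$, and $G^{*}+bd$ has a perfect matching $M_2$ using $bd$. Considering the symmetric difference $M_1\triangle M_2$, which is a disjoint union of even cycles and paths alternating between $M_1$ and $M_2$, I would trace the component containing $bd$ and the component containing $ac$ and reroute them in $G^{*}$ (replacing edges of $M_2$ by edges of $M_1$ on an appropriate segment, then using $ab$ or $bc$ to close off) to build a perfect matching of $G^{*}$ itself, contradicting the assumption. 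Handling the two subcases (where the alternating paths starting at $b,d$ reach $\{a,c\}$ or not) is the delicate bookkeeping step, but in every case one exhibits an explicit perfect matching of $G^{*}$, completing the contradiction and hence the proof.
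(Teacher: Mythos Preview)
The paper does not supply its own proof of this lemma; it is simply quoted as Tutte's classical 1947 theorem with a citation, so there is no in-paper argument to compare against.

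Your proposal is the standard Lov\'asz proof and is correct in outline. A couple of minor points worth tightening: when you say ``pick a shortest induced path $a,b,c$ of length two in $G^{*}-S$'', what you actually need is that some component of $G^{*}-S$ is not a clique, whence two nonadjacent vertices in that component have a shortest path of length at least two, and three consecutive vertices on it give your $a,b,c$; phrased your way it sounds as if you are minimizing over all induced $P_3$'s, which is not what drives the argument. Also, in the rerouting step it helps to note explicitly that $d\notin\{a,c\}$ (this is automatic since $ab,bc\in E(G^{*})$), and that the component of $M_1\triangle M_2$ through $bd$ is an even cycle (both $M_1,M_2$ are perfect matchings), so the case split is on whether $ac$ lies on that same cycle or not. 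With those clarifications the ``delicate bookkeeping'' goes through exactly as you indicate, and the proof is complete.
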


Let $G$ be a graph with a perfect matching. A subset $B$ of $V(G)$ is a {\it barrier} of $G$ if $o(G-B)=|B|$.  Clearly, the empty set is a barrier. In this paper, by a barrier we mean a nonempty barrier.
All the singletons are barriers of $G$, which are referred to as {\it trivial} barriers. A barrier that has exactly $k$ vertices, where $k\geq2$, is called a {\it $k$-barrier}.
A graph $H$ is {\it factor-critical} if, for any vertex $u$ of $H$, the subgraph $H-u$ has a perfect matching. By Lemma \ref{Tutte}, we can easily prove the following four propositions.

\begin{prop}\label{2-barrier}
Let $G$ be a connected graph with a perfect matching and $\{u,v\}$ a 2-vertex cut of $G$. If $G-\{u,v\}$ has an odd component, then $\{u,v\}$ is a 2-barrier of $G$.
\end{prop}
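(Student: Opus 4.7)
The plan is to combine Tutte's theorem (Lemma \ref{Tutte}) with a simple parity argument on the vertex set of $G$. Since $G$ has a perfect matching, $|V(G)|$ is even, which forces the number of odd components of $G-\{u,v\}$ to be even.

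More concretely, first I would apply Lemma \ref{Tutte} with $S=\{u,v\}$ to obtain the upper bound $o(G-\{u,v\})\le 2$. For the matching lower bound, I would observe that $|V(G-\{u,v\})|=|V(G)|-2$ is even (because $G$ has a perfect matching), so the sum of the orders of the components of $G-\{u,v\}$ is even. Hence the number of odd components of $G-\{u,v\}$ has the same parity as this sum, namely even. Under the hypothesis that $G-\{u,v\}$ has at least one odd component, this parity constraint forces $o(G-\{u,v\})\ge 2$.

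Combining the two bounds yields $o(G-\{u,v\})=2=|\{u,v\}|$, which is exactly the definition of a $2$-barrier, completing the proof.

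There is no real obstacle here: once one notices that $|V(G)|$ is even and applies Tutte's theorem, the result is immediate. The only mild subtlety is using the parity argument to upgrade \emph{at least one} odd component to \emph{at least two}; the assumption that $\{u,v\}$ is a $2$-vertex cut is in fact not needed for the conclusion, but it guarantees that $G-\{u,v\}$ is disconnected so the hypothesis is natural.
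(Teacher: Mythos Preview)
Your proposal is correct and matches the paper's approach: the paper does not spell out a proof but simply states that Propositions~\ref{2-barrier}--\ref{bicritical-no-barrier} follow easily from Lemma~\ref{Tutte}, and your argument (Tutte's inequality for the upper bound, parity of $|V(G)|-2$ for the lower bound) is precisely the intended one-line justification.
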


\begin{prop}\label{prop2}
Let $G$ be a graph with a perfect matching. Then
\vspace{-8pt}
\begin{enumerate}[(\romannumeral1)]
\setlength{\itemsep}{-1ex}
\item  an edge $e$ of $G$ is admissible if and only if there is no barrier that contains both ends of $e$, and
\item  for each  maximal barrier $B$ of $G$, all the components of $G-B$ are factor-critical.
\end{enumerate}
\end{prop}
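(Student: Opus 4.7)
My plan is to deduce both parts of Proposition \ref{prop2} directly from Tutte's theorem (Lemma \ref{Tutte}), using the standard device of converting a witness of non-admissibility or non-factor-criticality into a set $T$ whose deficiency saturates Tutte's bound, so that $T$ is itself a barrier. A small parity check is what promotes the strict inequality $o(G-T)>|T|-c$ supplied by Tutte's criterion into the equality $o(G-T)=|T|$ needed to identify a barrier.

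For part (i), I argue each direction by contradiction. In the easy forward direction, if $e=uv$ lies in a perfect matching $M$, then $M\setminus\{e\}$ is a perfect matching of $G-\{u,v\}$, so for any putative barrier $B$ containing both $u$ and $v$, applying Tutte to $G-\{u,v\}$ with the set $B\setminus\{u,v\}$ yields $o(G-B)\le|B|-2$, contradicting $o(G-B)=|B|$. Conversely, suppose no barrier contains both $u$ and $v$ but $G-\{u,v\}$ has no perfect matching. Tutte gives $S\subseteq V(G)\setminus\{u,v\}$ with $o(G-\{u,v\}-S)>|S|$; setting $T=S\cup\{u,v\}$, I obtain $o(G-T)\ge|T|-1$, and because $|V(G)|$ is even the parity of $o(G-T)$ agrees with that of $|T|$, so in fact $o(G-T)\ge|T|$. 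Combined with Tutte's bound on $G$, this forces equality, exhibiting a barrier $T$ that contains both ends of $e$.

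For part (ii), fix a maximal barrier $B$. I first rule out even components of $G-B$: if $C$ were one, then for any $v\in C$ the identity $o(G-(B\cup\{v\}))=|B|+o(C-v)\ge|B|+1$ combined with Tutte forces $B\cup\{v\}$ to be a barrier, contradicting maximality of $B$. Given then that every component of $G-B$ is odd, I show each component $C$ is factor-critical. If not, some $v\in V(C)$ makes $C-v$ admit no perfect matching, so Tutte inside $C$ produces $S\subseteq V(C)\setminus\{v\}$ with $o(C-v-S)>|S|$, and since $|C|$ is odd, a parity check inside $C$ promotes this to $o(C-v-S)\ge|S|+2$. Putting $T=B\cup\{v\}\cup S$, the odd-component count of $G-T$ is at least $(|B|-1)+(|S|+2)=|T|$, so $T$ is a barrier strictly containing $B$, the desired contradiction.

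There is no serious obstacle; the only point requiring care is making sure the parity check is carried out in the correct ambient graph, namely in $G$ (whose order is even) for part (i) and inside the odd component $C$ for part (ii). Apart from this bookkeeping, the proof is a routine application of Tutte's theorem by barrier manipulation, which is presumably why the authors present it as a proposition derivable from Lemma \ref{Tutte} without elaboration.
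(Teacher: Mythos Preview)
Your proof is correct and follows precisely the approach the paper intends: the paper does not spell out a proof of Proposition~\ref{prop2} at all, stating only that it (together with three neighbouring propositions) ``can easily'' be derived from Lemma~\ref{Tutte}. Your barrier-manipulation argument with the two parity promotions is the standard derivation and is exactly what the authors leave to the reader.
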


\begin{prop}\label{mc-B-stableset}
Let $G$ be a graph with a perfect matching. Then $G$ is matching covered if and only if, for every barrier $B$ of $G$, $G-B$ has no even components and $B$ is a stable set.
\end{prop}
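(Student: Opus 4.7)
The plan is to deduce both directions from Proposition \ref{prop2}(i), together with the standard Tutte-style count that a perfect matching induces on the components of $G-B$.

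For the converse, suppose that every barrier of $G$ is a stable set and that $G-B$ has no even components. Given any edge $e=uv$, if $e$ were inadmissible then Proposition \ref{prop2}(i) would yield a barrier $B$ containing both $u$ and $v$, contradicting stability of $B$. Hence every edge of $G$ is admissible, so $G$ is matching covered. Note that this half does not actually use the ``no even components'' hypothesis; it is needed only as part of the conclusion in the forward direction.

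For the forward direction, assume $G$ is matching covered and let $B$ be a (nonempty) barrier. Stability of $B$ follows at once: if $B$ contained both ends of some edge $uv$, then by Proposition \ref{prop2}(i) the edge $uv$ would be inadmissible, contradicting matching coverage. To rule out even components, suppose for contradiction that $H$ is an even component of $G-B$. Because $G$ is matching covered it is connected, and $B$ is nonempty, so there exists an edge $e=bu$ with $b\in B$ and $u\in V(H)$; by matching coverage, $e$ lies in some perfect matching $M$ of $G$. Now $G-B$ has exactly $|B|$ odd components, and no odd component can be perfectly matched internally, so $M$ must match at least one vertex of each odd component to a vertex of $B$. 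Since $|B|$ vertices of $B$ must absorb these $|B|$ demands, the $B$-endpoints of these matching edges form a bijection between $B$ and a transversal of the odd components. But $e\in M$ matches $b$ into the even component $H$, so only $|B|-1$ vertices of $B$ are left to satisfy $|B|$ odd-component demands, a contradiction. Hence no even component $H$ exists, completing the forward direction.

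The main obstacle is the even-component step, where two points need care: (i) the existence of the crossing edge $e$ from $B$ to $V(H)$, which relies on connectedness of $G$ (matching coverage) together with nonemptiness of $B$, and (ii) a clean formulation of the counting argument so that diverting a single vertex of $B$ into the even component $H$ really produces a genuine shortage for the $|B|$ odd components. Both are routine once set up, so no further subtlety is expected.
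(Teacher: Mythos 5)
Your main argument is correct, and it fills in a proof that the paper itself omits: the paper merely asserts that Proposition \ref{mc-B-stableset} (like Propositions \ref{2-barrier}, \ref{prop2} and \ref{bicritical-no-barrier}) can easily be proved from Lemma \ref{Tutte}, so there is no official proof to compare against. Your route --- deriving stability and the converse from Proposition \ref{prop2}(i), and ruling out even components by the count that each of the $|B|=o(G-B)$ odd components must send a matching edge to a distinct vertex of $B$, so the vertex $b$ matched into the even component $H$ creates a shortage of $|B|\geq |B|+1$ --- is exactly the standard Tutte-style argument the authors presumably have in mind, and the two points you flag (existence of the edge $e=bu$, and the disjointness of the $B$-endpoints) are both handled correctly.

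There is, however, one genuine (though easily repaired) flaw: your parenthetical claim that the converse ``does not actually use the no even components hypothesis'' is false under the paper's definitions. A matching covered graph is by definition \emph{connected}, and admissibility of every edge does not imply connectedness: the disjoint union of two even cycles has every edge admissible but is not matching covered. What rescues the proposition as stated is precisely the even-component condition applied to singleton barriers. Indeed, if $G$ were disconnected with components $G_1,\dots,G_k$ ($k\geq 2$, each of even order since $G$ has a perfect matching), then for any $v\in V(G_1)$ the set $\{v\}$ is a barrier (Lemma \ref{Tutte} gives $o(G-v)\leq 1$ and parity gives $o(G-v)\geq 1$), yet $G-v$ has the even components $G_2,\dots,G_k$, contradicting the hypothesis. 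So in the converse direction you should first derive connectedness of $G$ from the hypothesis in this way, and only then conclude from the admissibility of all edges that $G$ is matching covered.
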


\begin{prop}\label{bicritical-no-barrier}
 A matching covered graph different from $K_2$ is bicritical if and only if it is free of nontrivial barriers.
\end{prop}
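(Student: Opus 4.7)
\medskip

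\noindent\textbf{Proposal.} The plan is to prove the two directions separately using Tutte's theorem (Lemma \ref{Tutte}) together with a parity observation: since a matching covered graph $G \ne K_2$ has a perfect matching, $|V(G)|$ is even, and for every $T \subseteq V(G)$ the quantities $o(G-T)$ and $|T|$ have the same parity.

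For the forward direction, suppose for contradiction that $G$ is bicritical and admits a nontrivial barrier $B$, so that $|B|\ge 2$ and $o(G-B)=|B|$. Pick any two distinct vertices $u,v\in B$ and set $S=B\setminus\{u,v\}$, so $|S|=|B|-2$. Since $G-\{u,v\}-S=G-B$, we have $o\bigl((G-\{u,v\})-S\bigr)=|B|=|S|+2>|S|$, which by Lemma \ref{Tutte} contradicts the fact that $G-\{u,v\}$ has a perfect matching (which is guaranteed by bicriticality). Hence no nontrivial barrier exists.

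For the converse, assume $G$ has no nontrivial barrier and suppose for contradiction that $G$ is not bicritical, so some pair of distinct vertices $u,v$ has the property that $G-\{u,v\}$ has no perfect matching. By Lemma \ref{Tutte} applied to $G-\{u,v\}$, there is a set $S\subseteq V(G)\setminus\{u,v\}$ with $o\bigl((G-\{u,v\})-S\bigr)>|S|$. Let $B=S\cup\{u,v\}$; then $o(G-B)>|B|-2$. Because $G$ itself has a perfect matching, Lemma \ref{Tutte} also gives $o(G-B)\le |B|$, and the parity observation forces $o(G-B)$ and $|B|$ to agree mod $2$. These three facts pin down $o(G-B)=|B|$, so $B$ is a barrier. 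Since $\{u,v\}\subseteq B$, we have $|B|\ge 2$, yielding a nontrivial barrier, a contradiction.

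Both directions are essentially one-step arguments once Tutte's theorem is in hand, so the only mild subtlety, and the point to be most careful about, is the parity bookkeeping used in the converse: we need the strict inequality $o(G-B)>|B|-2$ to be upgraded to $o(G-B)\ge |B|$, and this upgrade relies on $G$ having a perfect matching (so $|V(G)|$ is even), which is precisely what matching coveredness guarantees for $G\ne K_2$.
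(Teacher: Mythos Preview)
Your proof is correct and follows precisely the route the paper indicates: the paper states that Propositions~\ref{2-barrier}--\ref{bicritical-no-barrier} ``can easily [be proved]'' from Lemma~\ref{Tutte}, without giving explicit arguments, and your two Tutte-based contradictions (with the parity observation in the converse) are exactly the standard way to do this.
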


\begin{lemma}\label{clawfreebicritical-no2barrier}
 A claw-free matching covered graph different from $K_2$ is bicritical if and only if it is free of 2-barriers.
\end{lemma}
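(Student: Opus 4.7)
The forward direction is immediate from Proposition \ref{bicritical-no-barrier}: a bicritical graph has no nontrivial barrier and, in particular, no 2-barrier. The substance of the lemma is the converse, which I would prove by contradiction. Assume $G$ is claw-free, matching covered, different from $K_2$, and free of 2-barriers, but not bicritical. By Proposition \ref{bicritical-no-barrier}, $G$ admits a nontrivial barrier; fix a maximal such barrier $B$. Proposition \ref{mc-B-stableset} then makes $B$ a stable set, Proposition \ref{prop2}(ii) makes every component of $G-B$ factor-critical (hence odd), and so $G-B$ has exactly $k=|B|$ odd components $C_1,\ldots,C_k$. Since $G$ has no 2-barrier, we must have $k\geq 3$.

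The heart of the argument is the auxiliary bipartite graph $H$ on $B\cup\{c_1,\ldots,c_k\}$, where $vc_i\in E(H)$ exactly when $v$ has a $G$-neighbour in $C_i$. I would establish two complementary degree bounds. First, every $v\in B$ has $d_H(v)\leq 2$: if $v$ had $G$-neighbours in three distinct components $C_i, C_j, C_\ell$, then choosing one neighbour of $v$ from each yields an independent triple in $N(v)$ (vertices lying in different components of $G-B$ share no edges, and $B$ is stable), producing a claw centred at $v$. Second, every $c_i$ has $d_H(c_i)\geq 2$: otherwise the unique $v\in B$ joined to $c_i$ would be a cut vertex of $G$ (here I use $k\geq 3$ so that $B\setminus\{v\}$ is still nonempty and removing $v$ separates $C_i$ from the rest), contradicting the fact that every matching covered graph of order at least $3$ is 2-connected. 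Counting $|E(H)|$ on both sides forces each degree to equal $2$, so $H$ is $2$-regular; and since $G$ is connected, so is $H$. Therefore $H$ is a single even cycle of length $2k\geq 6$.

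From this cycle structure the 2-barrier appears by inspection. Label so that $H = v_1 c_1 v_2 c_2 \cdots v_k c_k v_1$; then $C_1$'s only $B$-neighbours in $G$ are $v_1$ and $v_2$. In $G-\{v_1,v_2\}$, the subgraph on $V(C_1)$ is untouched (it is a component of $G-B$ and we deleted no vertex of it) and is now isolated from the rest, while the components $C_2,\ldots,C_k$ remain tied together through the unharmed $B$-vertices $v_3,\ldots,v_k$, since each such $v_j$ retains one neighbour in $C_{j-1}$ and one in $C_j$. Hence $G-\{v_1,v_2\}$ has exactly two components: $C_1$ (of odd order) and the rest (of order $|V(G)|-2-|C_1|$, which is odd because $|V(G)|$ is even). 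So $o(G-\{v_1,v_2\})=2=|\{v_1,v_2\}|$, exhibiting $\{v_1,v_2\}$ as a 2-barrier of $G$ and contradicting the hypothesis. The step most likely to require care is the pair of degree bounds defining the cycle structure of $H$, and in particular the use of $k\geq 3$ to rule out any $c_i$ of degree one; once those bounds are in hand, the 2-barrier falls out almost combinatorially.
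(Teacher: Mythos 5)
Your proof is correct and follows essentially the same route as the paper: both arguments double-count the edges of the bipartite graph between the barrier $B$ and the components of $G-B$, using claw-freeness for the upper bound $d(v)\le 2$ on the $B$-side and $2$-connectedness for the lower bound $d(c_i)\ge 2$ on the component side, then extract a 2-barrier from the forced equality. The only difference is that you go on to establish that the auxiliary graph is a single even cycle and verify $o\bigl(G-\{v_1,v_2\}\bigr)=2$ by hand, whereas the paper stops at ``each component attaches to exactly two vertices of $B$'' and invokes Proposition \ref{2-barrier} directly --- your extra step is harmless but unnecessary.
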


\begin{proof}
Let $G$ be a claw-free matching covered graph different from $K_2$. If $G$ is bicritical, by Proposition \ref{bicritical-no-barrier}, the result holds.

Conversely, suppose that $G$ is free of 2-barriers. Let $B$ be a barrier of $G$. Then $|B|\neq2$. By Proposition \ref{mc-B-stableset}, $G-B$ has no even components and $B$ is a stable set. Now suppose that $|B|\geq3$. Let $H$ be the underlying simple graph of the graph obtained from $G$ by contracting each component of $G-B$ to a single vertex.
Note that $G$ is 2-connected. Then each component of $G-B$ connects at least two vertices in $B$, so $|E(H)|\geq2|B|$. On the other hand, since $G$ is claw-free, each vertex of $B$ connects at most two components of $G-B$, so $|E(H)|\leq2|B|$. Therefore, $|E(H)|=2|B|$. This implies that each component $L$ of $G-B$ connects exactly two vertices in $B$, say $\{u_L,v_L\}$. Then $\{u_L,v_L\}$ is a 2-vertex cut of $G$, and $G-\{u_L,v_L\}$ has an odd component $L$. By Proposition \ref{2-barrier},  $\{u_L,v_L\}$ is a 2-barrier of $G$, a contradiction. Thus $|B|=1$. By Proposition \ref{bicritical-no-barrier}, $G$ is bicritical.
\end{proof}

\begin{lemma}[\cite{ZYP22}]\label{removable-tight}
Let $C$ be a tight cut of a matching covered graph $G$ and $e$ an edge of $G$. Then $e$ is
removable in $G$ if and only if $e$ is removable in each $C$-contraction of $G$ that contains it.
\end{lemma}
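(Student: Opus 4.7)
The plan is to exploit the correspondence between perfect matchings of $G$ and compatible pairs of perfect matchings of the two $C$-contractions $G_1 = G\{X\}$ and $G_2 = G\{\overline{X}\}$. Since $C$ is a tight cut of a matching covered graph, both $G_1$ and $G_2$ are matching covered, and every perfect matching $M$ of $G$ decomposes uniquely as $M = M_1 \cup M_2$, where $M_i$ is a perfect matching of $G_i$ and $M_1 \cap C = M_2 \cap C$ consists of a single edge of $C$. Conversely, any two perfect matchings of $G_1$ and $G_2$ that share their unique $C$-edge glue into a perfect matching of $G$.

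For the forward direction, I would assume $G - e$ is matching covered and split on the location of $e$. If $e$ lies in $E(G[X])$ (the case $e \in E(G[\overline{X}])$ is symmetric), then $C$ remains a cut of $G-e$; every perfect matching of $G-e$ is a perfect matching of $G$ avoiding $e$, so it still meets $C$ exactly once. Hence $C$ is tight in $G-e$, and the contraction $(G-e)\{X\}$, which coincides with $G_1 - e$, is matching covered. If $e \in C$, an analogous argument applies to $C - e = \partial(X)$ in $G - e$, whose two contractions are $G_1 - e$ and $G_2 - e$, both matching covered.

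For the converse, assume $e$ is removable in each $C$-contraction containing it; I must show that every edge $g$ of $G - e$ lies in a perfect matching of $G - e$. The key construction: using the matching-coveredness of the contraction on the side containing $g$ (replacing $G_i$ by $G_i - e$ if $e$ lies there), produce a perfect matching $M_1$ containing $g$, read off its unique $C$-edge $f$, and then invoke the matching-coveredness of the other contraction (again $G_i - e$ if $e$ lies on that side) to find a perfect matching $M_2$ through $f$. Gluing $M_1$ and $M_2$ gives a perfect matching of $G$ that avoids $e$ and contains $g$. When $g \in C$, one simply takes $g$ itself as the shared $C$-edge in both matchings.

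The main obstacle is the bookkeeping in the case $e \in C$: one needs to know that the chosen $C$-edge $f$ of $M_1$ differs from $e$, so that $f$ is still present in $G_2 - e$ and can be extended. This is automatic because $M_1$ is a perfect matching of $G_1 - e$, whence $e \notin M_1$ and thus $f \in C - e$. The remaining verifications are routine once the gluing correspondence and the stability of the tight cut $C$ (or $C - e$) under deletion of $e$ have been established.
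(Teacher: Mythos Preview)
The paper does not prove this lemma; it is quoted from \cite{ZYP22} and stated without argument. So there is no ``paper's own proof'' to compare against.

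That said, your argument is correct and is essentially the standard one. The gluing correspondence you describe---perfect matchings of $G$ versus compatible pairs of perfect matchings of $G_1$ and $G_2$ agreeing on their single $C$-edge---is exactly what drives the result, and you handle the case split on the location of $e$ (inside a shore versus on the cut) properly. The one point that deserves a sentence in a written-out proof is connectedness of $G-e$ in the converse direction: you need $G-e$ connected to call it matching covered, and this follows because a matching covered graph other than $K_2$ is $2$-connected, so deleting one edge cannot disconnect it. Your observation that in the case $e\in C$ the shared $C$-edge $f$ of $M_1$ automatically lies in $C\setminus\{e\}$ is the right way to dispatch the only genuinely delicate sub-case.
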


Let $G$ be a matching covered graph, and let $e$ and $f$ be  two distinct  edges of $G$. We say that $e$ {\it depends on} $f$, if every perfect matching of $G$ that contains $e$ also contains $f$. Equivalently, $e$ depends on $f$ if $e$ is inadmissible in $G-f$.
We call $e$ and $f$ are {\it mutually dependent} if $e$ depends on $f$ and vice versa. %$f$ depends on $e$.
Observe that mutual dependence is an equivalence relation on $E(G)$, so the edge set $E(G)$ can be partitioned into equivalence classes. In general, an equivalence class of $G$ can be arbitrarily large, see \cite{Lu20}. However, Carvalho et al. \cite{CLM99} showed that the  cardinality of each equivalence class of a brick is at most two. The following lemma can be easily derived.

\begin{lemma}\label{depend-on}
For every matching covered graph $G$ different from $K_2$, an edge $e$ is removable in $G$ if and only if no other edge depends on $e$.
\end{lemma}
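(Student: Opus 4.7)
The plan is to derive both directions of this equivalence straight from the definitions, using only the fact (already noted in the excerpt) that a matching covered graph different from $K_2$ is 2-connected.

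For the forward direction, I would assume $e$ is removable, so $G-e$ is matching covered. Pick any edge $f\neq e$. Since $G-e$ is matching covered, $f$ is admissible in $G-e$, so there is a perfect matching $M$ of $G-e$ with $f\in M$. Viewed as a perfect matching of $G$, $M$ contains $f$ but not $e$, which by the definition of dependence shows that $f$ does not depend on $e$. Since $f$ was arbitrary, no edge other than $e$ depends on $e$.

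For the converse, I would assume that no edge other than $e$ depends on $e$ and verify the three conditions in the definition of matching covered graph for $G-e$. (a) Admissibility: for any edge $f\neq e$ of $G$, the hypothesis gives a perfect matching of $G$ containing $f$ and avoiding $e$; this matching is a perfect matching of $G-e$ witnessing that $f$ is admissible in $G-e$. In particular, $G-e$ has a perfect matching. (b) Connectivity: since $G$ is matching covered and $G\neq K_2$, $G$ is 2-connected, hence 2-edge-connected, so $G-e$ is still connected. (c) Nontriviality: $G-e$ has the same vertex set as $G$, which has at least three vertices by 2-connectivity. Hence $G-e$ is matching covered, i.e., $e$ is removable.

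There is no real obstacle here beyond carefully handling the hypothesis $G\neq K_2$: the exclusion is needed in the backward direction to guarantee both that $G-e$ stays connected and that there is at least one other edge $f$ (which is used implicitly, since otherwise the statement ``no other edge depends on $e$'' is vacuous while $e$ is not removable in $K_2$). With 2-connectivity in hand, the argument is a direct translation between the two formulations: ``$f$ is admissible in $G-e$'' and ``$f$ does not depend on $e$'' are logically the same statement about the existence of a perfect matching of $G$ containing $f$ and avoiding $e$.
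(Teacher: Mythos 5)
Your proof is correct, and it is exactly the routine unwinding of definitions that the paper has in mind when it states the lemma without proof (``The following lemma can be easily derived''): perfect matchings of $G-e$ are precisely the perfect matchings of $G$ avoiding $e$, so ``$f$ admissible in $G-e$'' and ``$f$ does not depend on $e$'' coincide, and $2$-connectivity of $G\neq K_2$ handles connectivity and nontriviality of $G-e$. Nothing further is needed.
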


\begin{lemma}\label{K5freeK4}
Let $G$ be a matching covered graph that contains a complete subgraph $H$.
\vspace{-8pt}
\begin{enumerate}[(\romannumeral1)]
\setlength{\itemsep}{-1ex}
\item  If $|V(H)|\geq5$, then at least one of any two adjacent edges of $H$ is removable in $G$. In particular, if $G$ has no removable edges, then $G$ is $K_5$-free.
\item  Suppose that there is a vertex $u$ of $H$ that has exactly one neighbour $w$ in $V(G)\backslash V(H)$.
If either  $|V(H)|=4$ and each edge of $H$ is nonremovable in $G$ or $|V(H)|=3$, then $uw$ is nonremovable in $G$.
\end{enumerate}
\end{lemma}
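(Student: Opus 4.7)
The plan is to prove both parts by contradiction, using Lemma \ref{depend-on}: $e$ is removable in $G$ if and only if no other edge depends on $e$.

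For (i), suppose that $e_1=uv$ and $e_2=uw$ are two adjacent edges of $H=K_n$ with $n\geq 5$ and that both are nonremovable. Then Lemma \ref{depend-on} provides edges $f_1\neq e_1$ and $f_2\neq e_2$ such that $f_i$ depends on $e_i$. A first step is to observe that $f_i$ shares no vertex with $e_i$, since otherwise $f_i$ and $e_i$ could never lie in a common perfect matching, contradicting the admissibility of $f_i$. Next, I would pick a perfect matching $M_2$ of $G$ containing both $e_2$ and $f_2$ (which exists by admissibility of $f_2$ and its dependence on $e_2$), and examine the mate of $v$ in $M_2$. Using $n\geq 5$, whenever $v$ is matched in $M_2$ to some vertex $z\in V(H)\setminus\{u,v,w\}$, the edges $uw,wz,zv,vu$ form an $M_2$-alternating $4$-cycle inside $H$; swapping along this cycle produces a perfect matching $M_2'$ with $e_2\notin M_2'$. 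Unless $f_2$ happens to be precisely the edge $vz$, we get $f_2\in M_2'$, contradicting the dependence of $f_2$ on $e_2$. The main obstacle will be the case in which $v$ is forced to be matched outside $V(H)$ in every such $M_2$; here I would use alternating cycles that leave $V(H)$ and return through a vertex of $V(H)\setminus\{u,v,w\}$, exploiting the symmetric constraint obtained from $f_1$ and $e_1$ to rule this out. The ``in particular'' conclusion then follows immediately, since any $K_5$ subgraph in a graph without removable edges would contain two adjacent edges that are both nonremovable, contradicting the main assertion of (i).

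For (ii) with $|V(H)|=3$, write $V(H)=\{u,a,b\}$, so that $N_G(u)=\{a,b,w\}$. Since $G$ is matching covered, there is a perfect matching $M\ni ab$. In $M$ the vertex $u$ must be matched to a neighbour, but $a$ and $b$ are used by $ab$, so $uw\in M$. Hence $ab$ depends on $uw$, and Lemma \ref{depend-on} implies that $uw$ is nonremovable.

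For (ii) with $|V(H)|=4$ and every edge of $H=K_4$ nonremovable in $G$, write $V(H)=\{u,a,b,c\}$ and suppose for contradiction that $uw$ is removable, so that $G-uw$ is matching covered. In $G-uw$ the vertex $u$ has only the three neighbours $a,b,c$. Applying the argument of the previous paragraph inside $G-uw$ to each of $ab,ac,bc$ produces perfect matchings of $G-uw$, hence of $G$, of the shapes $\{ua,bc\}$, $\{ub,ac\}$, $\{uc,ab\}$ on $H$. I would then combine these perfect matchings with the hypothesis that every edge of $H$ is nonremovable in $G$ --- so that each such edge has a dependent edge disjoint from it, and within $E(H)$ the only candidates are the ``opposite'' edges --- to reach a contradiction. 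The hardest part will be ruling out dependent edges lying outside $E(H)$; this is exactly where the hypothesis that $w$ is the \emph{unique} neighbour of $u$ in $V(G)\setminus V(H)$ is used, since it severely constrains how edges incident with $u$ can interact with the outside of $H$ and forces every dependency involving $u$ to be detectable inside $E(H)$, producing the desired contradiction.
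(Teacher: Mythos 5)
Your argument for (ii) with $|V(H)|=3$ is correct and is exactly the paper's: every perfect matching through $ab$ must match $u$ to $w$, so $ab$ depends on $uw$ and Lemma \ref{depend-on} applies. The trouble is with (i) and with (ii) for $|V(H)|=4$, where in both cases you rely only on Lemma \ref{depend-on}, which hands you a dependent edge $f$ with \emph{no control over where $f$ lies}, and the resulting case analysis is left unfinished at precisely the hard cases. In (i), your alternating $4$-cycle swap disposes only of the case where $v$'s mate $z$ lies in $V(H)\setminus\{u,v,w\}$ and $f_2\neq vz$. The two remaining cases are genuinely open in your sketch: if $f_2=vz$ you cannot swap on that cycle (with $n\geq 5$ one can sometimes swap on a second intra-$H$ matched pair, but only if one exists), and if $v$ --- indeed all of $V(H)\setminus\{u,w\}$ --- is matched outside $V(H)$, the proposed ``alternating cycles that leave $V(H)$ and return'' is pure hope: nothing guarantees such a cycle avoids $f_2$, which may sit anywhere in $G$, and the ``symmetric constraint obtained from $f_1$ and $e_1$'' is never instantiated (symmetry just reproduces the same stuck configuration at $e_1$). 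Likewise in (ii) with $|V(H)|=4$: the observation that a dependent edge lying \emph{inside} $E(H)$ must be the opposite edge is fine, but Lemma \ref{depend-on} does not force the dependent edge into $E(H)$, and the uniqueness of $w$ constrains only edges at $u$ --- it says nothing about an edge far from $H$ depending on $ab$, $ac$ or $bc$. No contradiction is actually derived in either part.

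The missing idea, which is the heart of the paper's proof, is a barrier claim that upgrades Lemma \ref{depend-on}: if $e$ is nonremovable, then $G-e$ has an inadmissible edge, and Proposition \ref{prop2} yields a \emph{maximal barrier} $B$ of $G-e$ whose deletion leaves factor-critical components separating the two ends of $e$, such that \emph{every} edge with both ends in $B$ depends on $e$. Since $H$ is complete, each vertex of $V(H)$ other than the ends of $e$ is adjacent to both ends, hence cannot lie in either component and must lie in $B$ --- this is what pins the dependent edges inside $H$. For (i), applying this to $v_1v_i$ and $v_1v_j$ with $|V(H)|\geq 5$ produces $s,t\in V(H)\setminus\{v_1,v_i,v_j\}$ lying in both barriers, so $st$ depends simultaneously on $v_1v_i$ and $v_1v_j$; but these share the vertex $v_1$, so no perfect matching contains both, contradicting the admissibility of $st$. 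For (ii) with $|V(H)|=4$, the same claim gives that opposite edges of the $K_4$ are \emph{mutually} dependent; then any perfect matching $M$ through $uw$ must avoid $E(H)$ entirely (an opposite pair in $M$ would force an edge at $u$ into $M$), and the edge of $M$ at $v_2$ is shown to depend on $uw$ directly --- no contradiction argument via $G-uw$ is needed. Your proposal would be repaired by proving and using this barrier claim in place of the bare Lemma \ref{depend-on}; as written, both (i) and the $|V(H)|=4$ case of (ii) have genuine gaps.
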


\begin{proof}
Let $V(H)=\{v_1,v_2,\ldots,v_k\}$, where $k\geq1$. Firstly, we show the following claim.
\vspace{1.5mm}

\textbf{Claim.} If $e$ is nonremovable in $G$, then there is a maximal barrier $B$ of $G-e$ such that $e$ has its ends in distinct components of $G-e-B$, and each edge whose two ends lie in $B$ depends on $e$.

Suppose that $e$ is nonremovable in $G$. Then $G-e$ is not matching covered, so it has an inadmissible edge $f$. By Proposition \ref{prop2}(i), $G-e$ has a barrier that contains the two ends of $f$. Let $B$ be such a maximal barrier. By Proposition \ref{prop2}(ii), each component of $G-e-B$ is factor-critical (and odd). Since $f$ is admissible in $G$, $e$ has its ends in distinct components of $G-e-B$. Observe that for each edge $h$ whose two ends lie in $B$, each perfect matching of $G$ that contains $h$ also contains $e$. So $h$ depends on $e$. The claim follows.
\vspace{1.5mm}

(i) Suppose that $k\geq5$. By symmetry, it suffices to prove that at least one of $v_1v_i$ and $v_1v_j$ is removable in $G$, where $2\leq i,j\leq k$ and $i\neq j$. Since $k\geq5$, there are at least two vertices in $V(H)\backslash \{v_1,v_i,v_j\}$, say $s$ and $t$. Assume, to the contrary, that both $v_1v_i$ and $v_1v_j$ are nonremovable in $G$. By the above Claim, there is a maximal barrier $B_l$ of $G-v_1v_l$ such that $v_1v_l$ has its ends in distinct components of $G-v_1v_l-B_l$, and each edge whose two ends lie in $B_l$ depends on $v_1v_l$, $l=i,j$. Recall that $H$ is complete. Then $\{s,t\}\subseteq B_i\cap B_j$. It follows that $st$ depends on $v_1v_i$ and $v_1v_j$, that is,
each perfect matching of $G$ that contains $st$ also contains $v_1v_i$ and $v_1v_j$. However, it is impossible. Thus either $v_1v_i$ or $v_1v_j$ is removable in $G$. (i) holds.

(ii) Suppose, without loss of generality, that $u=v_1$. If $k=3$, then each perfect matching of $G$ that contains $v_2v_3$ also contains $uw$, that is, $v_2v_3$ depends on $uw$. By Lemma \ref{depend-on}, $uw$ is nonremovable in $G$.

Now we may assume that $k=4$ and each edge of $H$ is nonremovable in $G$. By the above Claim, there is a maximal barrier $B'$ of $G-v_1v_2$ such that $v_1v_2$ has its ends in distinct components of $G-v_1v_2-B'$, and each edge whose two ends lie in $B'$ depends on $v_1v_2$. Since $H$ is complete, we have $v_3,v_4\in B'$, so $v_3v_4$ depends on $v_1v_2$. Likewise, $v_1v_2$ depends on $v_3v_4$. Therefore, $v_1v_2$ and $v_3v_4$ are mutually dependent. By symmetry, $v_1v_3$ and $v_2v_4$ are mutually dependent, and $v_1v_4$ and $v_2v_3$ are mutually dependent. Since $G$ is matching covered, there is a perfect matching $M$ containing $uw$. Clearly, $M\cap \{v_1v_2,v_1v_3,v_1v_4\}=\emptyset$. It follows that $M\cap E(H)=\emptyset$. Let $v_2v_2'\in M$. Then $v_2'\notin V(H)$. Since $u$ has exactly one neighbour $w$ in $V(G)\backslash V(H)$, each perfect matching of $G$ that contains $v_2v_2'$ also contains $uw$. So $v_2v_2'$ depends on $uw$. By Lemma \ref{depend-on}, $uw$ is nonremovable in $G$.
\end{proof}

\subsection{Operations}

Let $G_1$ and $G_2$ be two disjoint graphs with a specified vertex $u_1$ and $u_2$, respectively, such that $d_{G_1}(u_1)=d_{G_2}(u_2)$. Suppose that $\pi$ is a bijection between  $\partial _{G_1}(u_1)$ and $\partial _{G_2}(u_2)$.  Let $G$ be  the graph obtained from the union of $G_1-u_1$ and $G_2-u_2$ by joining, for edge $e$ in $\partial _{G_1}(u_1)$, the end of $e$ in $G_1-u_1$ to the end of $\pi(e)$ in $G_2-u_2$. We refer to $G$ as a graph obtained by \emph{splicing} $G_1$ and $G_2$.
The following lemma is easily proved by the definition of
matching covered graph.

\begin{lemma}\label{splicing}
Any graph obtained by splicing two matching covered graphs is also matching covered.
\end{lemma}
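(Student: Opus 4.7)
The plan is to verify the two conditions in the definition of a matching covered graph—connectedness together with the existence, for each edge, of a perfect matching containing it—by directly stitching together perfect matchings of $G_1$ and $G_2$ through the bijection $\pi$. First I would note that $E(G)$ decomposes into three classes: edges of $G_1-u_1$, edges of $G_2-u_2$, and \emph{spliced edges}, where for each $e\in\partial_{G_1}(u_1)$ with $G_1$-end $v$ and each $\pi(e)\in\partial_{G_2}(u_2)$ with $G_2$-end $v'$, the pair $(e,\pi(e))$ corresponds to a single edge $e^{\ast}=vv'$ of $G$. The core construction that drives everything is this: if $M_1$ is a perfect matching of $G_1$ and $e$ is the unique edge of $M_1$ incident with $u_1$, and $M_2$ is a perfect matching of $G_2$ containing $\pi(e)$, then
$$M:=(M_1\setminus\{e\})\cup(M_2\setminus\{\pi(e)\})\cup\{e^{\ast}\}$$
is a perfect matching of $G$, because $M_1\setminus\{e\}$ saturates $V(G_1)\setminus\{u_1,v\}$, the set $M_2\setminus\{\pi(e)\}$ saturates $V(G_2)\setminus\{u_2,v'\}$, and $e^{\ast}$ saturates the remaining two vertices $v$ and $v'$.

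Next I would check, case by case, that every edge of $G$ lies in some such $M$. If $h$ is an edge of $G_1-u_1$, matching coveredness of $G_1$ supplies a perfect matching $M_1$ containing $h$; let $e$ be the edge of $M_1$ incident with $u_1$, and apply matching coveredness of $G_2$ to obtain a perfect matching $M_2$ of $G_2$ containing $\pi(e)$. The case $h\in E(G_2-u_2)$ is symmetric. If $h=e^{\ast}$ is a spliced edge, pick any perfect matchings of $G_1$ and $G_2$ containing $e$ and $\pi(e)$ respectively; these exist because $G_1$ and $G_2$ are matching covered. In all three cases the construction above produces a perfect matching of $G$ containing $h$.

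For connectedness, each $G_i-u_i$ is connected (since a matching covered graph distinct from $K_2$ is $2$-connected, and the $K_2$ case is trivial), and the spliced edges---at least one exists because a matching covered graph has no isolated vertex, so $d_{G_1}(u_1)=d_{G_2}(u_2)\geq 1$---join the two sides. Nontriviality is automatic from $|V(G)|=|V(G_1)|+|V(G_2)|-2\geq 2$. There is no serious obstacle; the lemma reduces to a bookkeeping exercise once the ``compatible perfect matchings'' construction is identified, and the only point requiring care in a formal write-up is to treat $e^{\ast}$ consistently as a single edge of $G$ rather than as the two separate edges $e$ and $\pi(e)$ of $G_1$ and $G_2$.
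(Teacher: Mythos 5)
Your proof is correct and matches the intended argument: the paper states this lemma without proof (remarking only that it ``is easily proved by the definition of matching covered graph''), and the standard verification it has in mind is exactly your construction of stitching a perfect matching $M_1$ of $G_1$ and a compatible perfect matching $M_2\ni\pi(e)$ of $G_2$ into $(M_1\setminus\{e\})\cup(M_2\setminus\{\pi(e)\})\cup\{e^{\ast}\}$, together with the three-way case analysis on where the edge $h$ lives. Your handling of the side points (connectedness via $2$-connectivity of matching covered graphs other than $K_2$, and treating each pair $(e,\pi(e))$ as a single edge of $G$, which matters since the paper allows multiple edges) is sound, so there is nothing to add.
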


Let $G$ be a graph and $\partial(X)$ an edge cut of $G$. Then $G$ is obtained by splicing $G\{X\}$ and $G\{\overline{X}\}$. By Lemma \ref{splicing}, if both $G\{X\}$ and $G\{\overline{X}\}$ are matching covered, then $G$ is matching covered. Thus we can construct bigger matching covered graphs from smaller matching covered graphs by the operation of splicing.

Recall that a vertex of a graph is {\it bisimplicial} if its neighbours induce two disjoint cliques such that each clique has at least two vertices. Analogously, we say that an edge $ab$ of a graph $G$ is {\it bisimplicial} if $N(a)\backslash \{b\}$ and $N(b)\backslash \{a\}$ are  two disjoint cliques and each of them has at least two vertices.
For convenience,  we say that each edge of $K_4$ is bisimplicial.
To construct bigger claw-free matching covered graphs from smaller ones, we now define the following four operations with respect to bisimplicial vertices and bisimplicial edges (see Figure \ref{fig3}).  Let $G_1$ and $G_2$ be two disjoint connected graphs, and each of them has even number of vertices. Let $u_i$ be a bisimplicial vertex of $G_i$,   let $e_i=a_ib_i$ be a bisimplicial edge of $G_i$, and let $G_i'$ be a $u_i$-splitting of $G_i$ at $u_i$.
Then  $u_i'$ and $u_i''$ are the two split vertices of $G_i'$ obtained from $u_i$, $i=1,2$. %Now we describe the four operations as follows:

\begin{figure}[h]
 \centering
 \includegraphics[width=\textwidth]{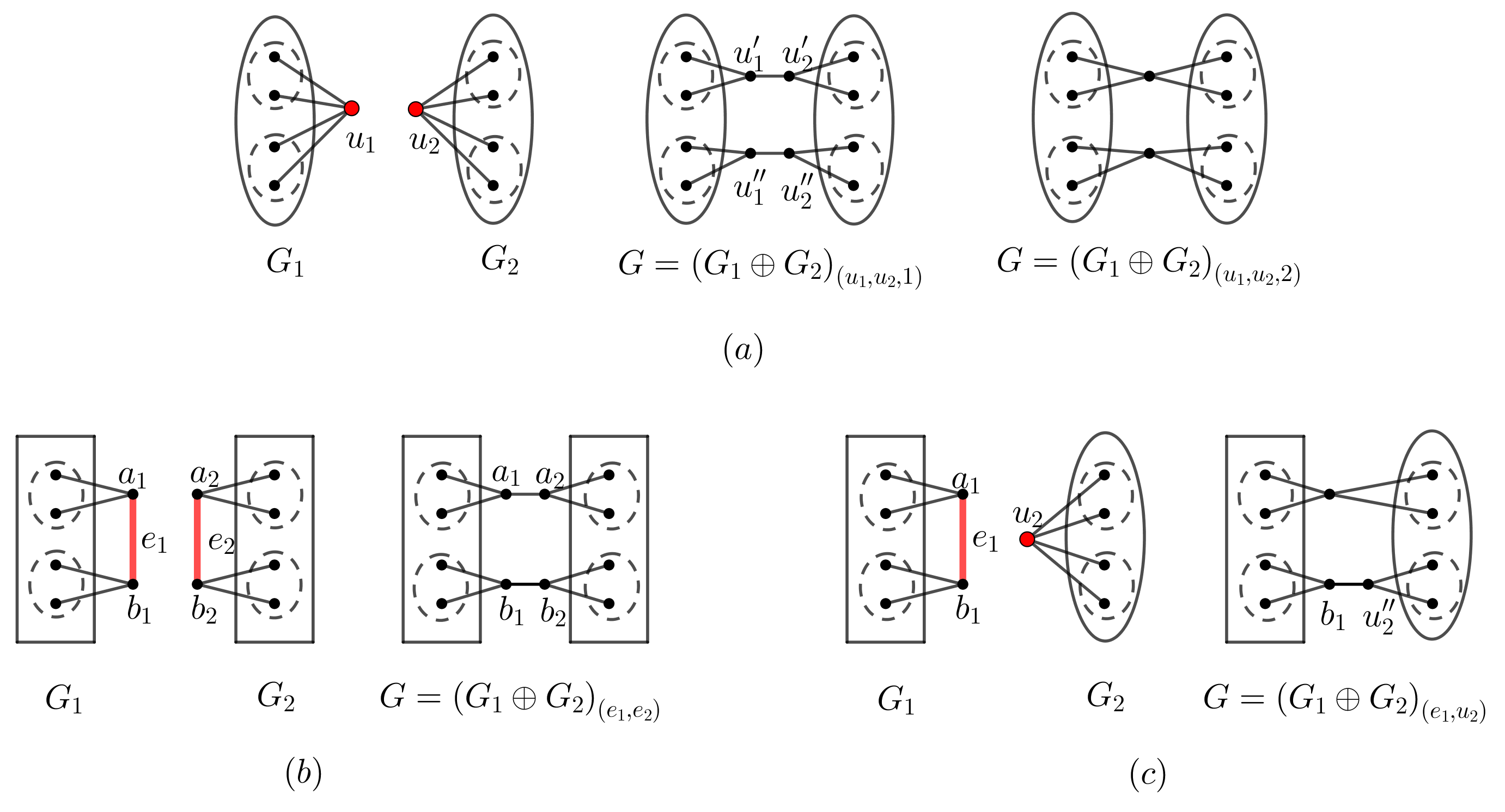}\\
 \caption{The four operations, where every dashed circle denotes a clique.}\label{fig3}
\end{figure}

If $G$ is the graph obtained from the union of $G_1'$ and $G_2'$ by adding two new edges $u_1'u_2'$ and $u_1''u_2''$, then we say that  $G$ is obtained from $G_1$ and $G_2$ by {\it V-joining} (at $u_1$ and $u_2$), and write   $G=(G_1\oplus G_2)_{(u_1,u_2,1)}$, see Figure \ref{fig3}(a).

If $G$ is the graph obtained from the union of $G_1'$ and $G_2'$ by identifying $u_1'$ and $u_2'$, and $u_1''$ and $u_2''$ into new vertices, respectively, then we say that  $G$ is obtained from $G_1$ and $G_2$ by {\it V-attaching} (at $u_1$ and $u_2$), and write  $G=(G_1\oplus G_2)_{(u_1,u_2,2)}$, see Figure \ref{fig3}(a).

If $G$ is the graph obtained from the union of  $G_1$ and $G_2$ by
(i) deleting the two edges $e_1$ and $e_2$, and
(ii) adding two new edges $a_1a_2$ and $b_1b_2$, then we say that $G$ is obtained from $G_1$ and $G_2$ by {\it E-joining} (at $e_1$ and $e_2$), and write $G=(G_1\oplus G_2)_{(e_1,e_2)}$, see Figure \ref{fig3}(b).

If  $G$ is the graph obtained from the union of $G_1$ and $G_2'$ by
(i) deleting the edge $a_1b_1$,
(ii) identifying $a_1$ and $u_2'$ into a single vertex, and
(iii) adding a new edge $b_1u_2''$, then we say that the graph $G$ is obtained from $G_1$ and $G_2$ by {\it EV-attaching} (at $e_1$ and $u_2$), and write  $G=(G_1\oplus G_2)_{(e_1,u_2)}$, see Figure \ref{fig3}(c).

%\begin{figure}[h]
% \centering
% \includegraphics[width=\textwidth]{3}\\
% \caption{The four operations, where every dashed circle denotes a clique.}\label{fig3}
%\end{figure}

Let $G$ be a graph  obtained from $G_1$ and $G_2$ by one of the above four operations. Then we say $G$ is a {\it compound} of   $G_1$ and $G_2$.  Note that each  edge of $G_1$ and $G_2$ other than  two possible edges $e_1$ and $e_2$ corresponds to an edge of $G$. When no confusion occurs, they are viewed  as the same edge.

Recall that each graph $G$ in $\mathcal{G}$ has the following properties:
(i) $G$ is either $K_4$-free or isomorphic to $K_4$,
(ii) each vertex of $G$ has degree 3 or 4, and
(iii) for a vertex $x\in V(G)$, if $d(x)=4$, then $G[N(x)]$ is the union  of two disjoint $K_2$s.
For each graph that is an expansion of a graph in $\mathcal{F}$, by the properties of graphs in  $\mathcal{G}$  and by the definitions of  expansion and the set $\mathcal{F}$, we can easily derive the following lemma.

\begin{lemma}\label{4-degree-2K2}
Let $G$ be an expansion of a graph in $\mathcal{F}$ and $x\in V(G)$. Then
\vspace{-8pt}
\begin{enumerate}[(\romannumeral1)]
\setlength{\itemsep}{-1ex}
\item  $G$ is either  $K_4$-free or isomorphic to $K_4$,
\item  $2\leq d(x)\leq4$, and
\item  if $d(x)=4$, then $G[N(x)]$ is the union of two disjoint $K_2$s.
\end{enumerate}
\end{lemma}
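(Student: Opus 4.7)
The plan is to proceed by structural induction on the total number of operations used to build $G$ from a graph in $\mathcal{G}$. The base case is when $G\in\mathcal{G}$ itself; properties (i)--(iii) are already recorded in the paragraph that introduces the family $\mathcal{G}$. For the inductive step I would verify that each of the six defining operations---bisubdivision of a ridge, expansion of a bisimplicial vertex, and the four compound operations V-joining, V-attaching, E-joining and EV-attaching---preserves the conjunction of (i), (ii) and (iii).

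First the two expansion operations. Bisubdividing a ridge $e=xy$ only inserts internal vertices of degree $2$ along an odd path of length at least $3$, keeping the degrees of $x$ and $y$ unchanged; since $e$ lies on no triangle, no $K_4$ is created. For the expansion of a bisimplicial vertex $u$, the inductive hypothesis together with the bisimpliciality of $u$ force $d(u)=4$ with $N(u)$ inducing two disjoint copies of $K_2$: by (i) the current graph is $K_4$-free (it cannot equal $K_4$, which has no bisimplicial vertex in the strict sense), so each of the two cliques in $N(u)$ must have size exactly $2$. After expansion each split vertex $u'$ and $u''$ has two clique-neighbours plus one path-neighbour, giving degree $3$; internal vertices on the connecting even path have degree $2$. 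No $K_4$ is created because the path-neighbour of $u'$ is adjacent only to $u'$, and symmetrically for $u''$.

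For the four compound operations I would use the analogous observation: any bisimplicial vertex $u_i$ of $G_i$ must have $d_{G_i}(u_i)=4$ with $N(u_i)$ two disjoint $K_2$s, and any bisimplicial edge $e_i=a_ib_i$ (not coming from a $K_4$-factor) must satisfy $d_{G_i}(a_i)=d_{G_i}(b_i)=3$ with $N(a_i)\setminus\{b_i\}$ and $N(b_i)\setminus\{a_i\}$ each a $K_2$. A short case check then shows: in V-joining, $u_i'$ and $u_i''$ each acquire degree $3$; in V-attaching, the two identified vertices each get degree $2+2=4$ with neighbourhood the disjoint union of one $K_2$ from $G_1$ and one from $G_2$, hence satisfying (iii); in E-joining all degrees are unchanged and the two new edges cross the partition; in EV-attaching the identified vertex has degree $2+2=4$ with neighbourhood again two disjoint $K_2$s, while $b_1$ and $u_2''$ each have degree $3$. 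In every case the new edges run between the initially disjoint factors, so no new triangle (and hence no new induced $K_4$) can arise, since that would require a common neighbour across the splice.

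The main obstacle is verifying (i) when one of $G_1, G_2$ is isomorphic to $K_4$. In that case I would check operation by operation that the operation first deletes an edge of the $K_4$ (as in E-joining and EV-attaching) or first splits a vertex of it (as in V-joining and V-attaching) before any new edge is introduced; the residual graph on that side is then isomorphic to $K_4^-$ or to the $u$-splitting of $K_4$, which is $K_4$-free, so no induced $K_4$ remains on which the new edges could build a new $K_4$ in $G$. This finite case analysis is the only slightly delicate part; the rest is a routine unpacking of the definitions.
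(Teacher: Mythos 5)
Your proposal is correct and is essentially the paper's own argument made explicit: the paper gives no proof of Lemma \ref{4-degree-2K2} beyond the remark that it follows ``by the properties of graphs in $\mathcal{G}$ and by the definitions of expansion and the set $\mathcal{F}$,'' and your induction over the construction operations---driven by the correct key observation that, under (i)--(iii), a bisimplicial vertex must have degree $4$ with neighbourhood two disjoint $K_2$s and a bisimplicial edge (outside the $K_4$ convention) must have both ends of degree $3$ with $K_2$ side-cliques---is exactly that routine verification carried out. One cosmetic point: since $K_4$ has no bisimplicial vertex, V-joining and V-attaching can never be applied to a $K_4$ factor, so the ``$u$-splitting of $K_4$'' sub-case in your last paragraph is vacuous, which only shortens your case analysis.
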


Let $G$ be a matching covered graph and $B$ a barrier of $G$.  If $L$ is a component of $G-B$, then $\partial (V(L))$ is a tight cut of $G$,  such a tight cut is referred to as a {\it barrier cut}. Recall that  $RE(G)$ denotes the set of all the removable edges of $G$. The following lemma describes an important property with respect to the compound of two graphs.

\begin{lemma}\label{four-operation-property}
Let $G$ be a compound of two graphs $G_1$ and $G_2$. Then the following statements hold.
\vspace{-8pt}
\begin{enumerate}[(\romannumeral1)]
\setlength{\itemsep}{-1ex}
\item  $G$ is claw-free and matching covered if and only if both $G_1$ and $G_2$ are claw-free and matching covered.
\item  Suppose that $G$ is claw-free and matching covered.
\vspace{-8pt}
\begin{enumerate}[(a)]
\setlength{\itemsep}{-1ex}
\item  If $G=(G_1\oplus G_2)_{(u_1,u_2,k)}$, $k=1,2$, then $RE(G)=RE(G_1)\cup RE(G_2)$, and $RE(G_i)=RE(G)\cap E(G_i)$, $i=1,2$.
\item  If $G=(G_1\oplus G_2)_{(e_1,e_2)}$, then $RE(G)=(RE(G_1)\cup RE(G_2))\backslash \{e_1,e_2\}$,  and\\ $RE(G_i)\backslash \{e_i\}=RE(G)\cap E(G_i)$, $i=1,2$.
\item  If $G=(G_1\oplus G_2)_{(e_1,u_2)}$, then $RE(G)=(RE(G_1)\cup RE(G_2))\backslash \{e_1\}$, $RE(G_1)\backslash \{e_1\}=RE(G)\cap E(G_1)$, and $RE(G_2)=RE(G)\cap E(G_2)$.
\end{enumerate}
\end{enumerate}
\end{lemma}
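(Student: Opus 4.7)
The plan is to treat the four operations uniformly via the tight-cut framework: for each operation, I will exhibit one tight cut (or two nested tight cuts) of $G$ whose contractions are isomorphic to $G_1$ and $G_2$, after which Lemma \ref{splicing} and Lemma \ref{removable-tight} will do most of the work, with local checks handling claw-freeness and the nonremovability of the newly added edges.

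For the tight cuts, I would take ``the $G_1$ side'' as the shore: $X=V(G_1)\setminus\{u_1\}$ for V-joining and V-attaching, $X=V(G_1)\setminus\{b_1\}$ for E-joining, and $X=V(G_1)\setminus\{a_1\}$ for EV-attaching. In every case $|X|$ is odd, so every perfect matching $M$ of $G$ meets $\partial(X)$ in an odd number of edges; a short case analysis using that $u_i$ is bisimplicial or $e_i$ is a bisimplicial edge then shows $|M\cap\partial(X)|=1$, so $\partial(X)$ is tight. The contraction $G\{X\}$ is isomorphic to $G_1$, the contracted vertex playing the role of $u_1$, $b_1$, or $a_1$; the opposite contraction is $G_2$ for V-joining and E-joining, and for V-attaching and EV-attaching it is $G_2'$ plus one extra contracted vertex attached by multi-edges, which I would reduce to $G_2$ by a further symmetric tight-cut decomposition along the analogous cut on the $G_2$ side.

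For part (i), the ``if'' direction will follow from the tight-cut framework: since both $C$-contractions are isomorphic to $G_1, G_2$ and hence matching covered, $G$ is matching covered. Claw-freeness of $G$ is a local check at the new or identified vertices (the split vertices $u_i', u_i''$, the identified vertices $w', w''$, or $w$): the bisimplicial hypothesis ensures each such vertex has its neighbourhood equal to a clique from one $G_i$ together with at most one vertex from the other side, which contains no independent set of size three, so no induced claw appears. For the ``only if'' direction, each $C$-contraction inherits matching-coveredness from $G$, and any induced claw of $G_i$ lifts to an induced claw of $G$, so claw-freeness of $G_i$ follows.

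For part (ii), apply Lemma \ref{removable-tight}: each edge $e$ of $E(G)\cap E(G_i)$ lies in exactly one contraction (the one isomorphic to $G_i$), so $e\in RE(G)$ iff $e\in RE(G_i)$. The new edges introduced by the operation -- $u_1'u_2'$ and $u_1''u_2''$ for V-joining; $a_1a_2$ and $b_1b_2$ for E-joining; $b_1u_2''$ for EV-attaching -- are nonremovable because $G$ minus any one of them contains a bridge or a cut vertex (the remaining new edge in the V-joining and E-joining cases, or the identified vertex $w$ in the EV-attaching case), contradicting the 2-connectivity required of matching covered graphs different from $K_2$. The subtractions $\setminus\{e_1,e_2\}$ and $\setminus\{e_1\}$ in cases (b) and (c) reflect that these edges exist in $G_i$ but are deleted to form $G$. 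The chief obstacle will be the case analysis establishing tightness of $\partial(X)$ for each operation, together with the iterated tight-cut reduction needed for V-attaching and EV-attaching; once tightness is in hand, the rest of the proof is a direct application of the standard tight-cut machinery.
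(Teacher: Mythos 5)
Your overall plan coincides with the paper's: one tight cut per side of the compound, Lemma~\ref{removable-tight} to transfer removability, Lemma~\ref{splicing} for matching-coveredness, and local checks at the split or identified vertices for claw-freeness. Your bridge/cut-vertex argument for the nonremovability of the newly added edges is correct and in fact simpler than the paper's, which shows instead that some edge depends on the new edge and invokes Lemma~\ref{depend-on}. However, two steps fail as written. The central one is your claim in part (ii) that ``each edge $e$ of $E(G)\cap E(G_i)$ lies in exactly one contraction.'' This is false: every edge of $G_1$ incident with $u_1$ (respectively with $a_1$ or $b_1$) becomes an edge of $G$ that crosses your cut $\partial(X)$, and a crossing edge lies in \emph{both} $C$-contractions. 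For such edges, Lemma~\ref{removable-tight} demands removability in both contractions, so the equivalence ``$e\in RE(G)$ iff $e\in RE(G_i)$'' does not follow as stated, and the inclusion $RE(G_i)\subseteq RE(G)$ is exactly where your argument has a hole. The repair, which the paper supplies, is to observe that in the opposite contraction all these crossing edges become multiple edges incident with the contracted vertex (their multiplicity is at least $2$ because $u_i$ is bisimplicial, or because $N_{G_1}(b_1)\setminus\{a_1\}$ has at least two vertices), hence they are removable there, and Lemma~\ref{removable-tight} then applies.

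The second problem is your identification of the opposite contraction: it is not $G_2$ for V-joining or E-joining. For V-joining, $G\{\overline{X}\}$ is $G_2'$ together with $u_1'$, $u_1''$ and the contracted vertex (so $|V(G_2)|+3$ vertices); for E-joining it is $G_2-e_2$ together with $b_1$ and the contracted vertex. Consequently the ``if'' direction of (i) does not follow from ``both $C$-contractions are isomorphic to $G_1,G_2$ and hence matching covered.'' Two fixes are available: either run your own device for V-attaching/EV-attaching uniformly, i.e., use the symmetric cut $Y=V(G_2)\setminus\{u_2\}$ (or $V(G_2)\setminus\{b_2\}$) on the other side for \emph{all four} operations, handling each $E(G_i)$ with its own cut; or, as the paper does for V-joining, splice in two steps: first note that contracting both sides yields a graph whose underlying simple graph is an even cycle (hence matching covered), then splice it with $G_2$ and the result with $G_1$ to recover $G$. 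With these two repairs -- the multiple-edge observation for crossing edges and the corrected treatment of the opposite contraction -- your argument goes through and essentially reproduces the paper's proof.
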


\begin{proof} Recall that each of $G_1$ and $G_2$ has even number of vertices. We only prove the case when $G=(G_1\oplus G_2)_{(u_1,u_2,1)}$.  The other three cases can be proved by similar arguments.

Suppose that $G=(G_1\oplus G_2)_{(u_1,u_2,1)}$. Then $G$ is not isomorphic to $K_2$ and $\{u_1',u_1''\}$ is a 2-vertex cut of $G$. Let $X_i=V(G_i)\backslash\{u_i\}$, $i=1,2$. Then $G_i$ is isomorphic to $G\{X_i\}$, $i=1,2$. Let $G_3=G\{\overline{X_1}\}$ and $G_4=G_3\{\overline{X_2}\}$.
Then $G_4$ is the graph obtained from $G$ by shrinking $X_1$ and $X_2$ to a single vertex, respectively.
Therefore, the underlying simple graph of $G_4$ is $C_6$, so $G_4$ is matching covered. Moreover, $G_3$ can be obtained by splicing $G_4$ and $G_2$, and $G$ can be obtained by splicing $G_1$ and $G_3$.

(i) Assume that both $G_1$ and $G_2$ are claw-free and matching covered.
By Lemma \ref{splicing} twice, $G$ is matching covered. Recall that $u_i$ is a bisimplicial vertex of $G_i$, $i=1,2$, then $G$ has no claw with centre $u_1',u_1'',u_2'$ and $u_2''$, respectively. So  $G$ is claw-free.

Conversely, assume that $G$ is claw-free and matching covered. Then $G$ is 2-connected. We assert that $G_1-u_1$ is connected. Otherwise, since $N_G(u_1')\cap X_1$ and $N_G(u_1'')\cap X_1$ are  two disjoint cliques, $G$ is disconnected or it has cut vertices $u_1'$ and $u_1''$, contradicting the fact that $G$ is 2-connected. Thus $G_1-u_1$ is an odd component of $G-\{u_1',u_1''\}$. By Proposition \ref{2-barrier}, $\{u_1',u_1''\}$ is a 2-barrier of $G$. So $\partial_G(X_1)$ is a barrier cut of $G$, and then is a tight cut of $G$. Recall that $G_1$ is isomorphic to $G\{X_1\}$. So $G_1$ is matching covered. Since $G$ is claw-free and $G_1$ has no claw with centre $u_1$, $G_1$ is also claw-free. By symmetry, $G_2$ is a claw-free matching covered graph. (i) holds.

(ii) Suppose that $G$ is claw-free and matching covered.
Since $\partial_G(X_1)$ is a  tight cut of $G$, if $f\in RE(G)\cap E(G_1)$, Lemma \ref{removable-tight} implies that $f\in RE(G_1)$. So $RE(G)\cap E(G_1)\subseteq RE(G_1)$.
Assume that $f\in RE(G_1)$. Note that each edge of $\partial_G(X_1)$ is a multiple edge of $G\{\overline{X_1}\}$, so is removable in  $G\{\overline{X_1}\}$.
Again by Lemma \ref{removable-tight},  whether $f\in \partial_G(X_1)$ or not, we have $f\in RE(G)\cap E(G_1)$. Thus $RE(G_1)\subseteq RE(G)\cap E(G_1)$.  Consequently, $RE(G_1)=RE(G)\cap E(G_1)$.
By symmetry, we have $RE(G_2)=RE(G)\cap E(G_2)$.

We next show that  $RE(G)=RE(G_1)\cup RE(G_2)$.
Let $w\in N_G(u_1')\backslash \{u_2'\}$. Since $G$ is matching covered, there is a perfect matching $M$ that contains $u_1'w$. Recall that $\partial(X_1)$ is a tight cut of $G$. Then $M\cap \partial(X_1)=\{u_1'w\}$. So $u_1''u_2''\in M$, that is, $u_1'w$ depends on $u_1''u_2''$ in $G$. By Lemma \ref{depend-on}, $u_1''u_2''$ is nonremovable in $G$. Likewise, $u_1'u_2'$ is also nonremovable in $G$. Therefore, $$RE(G)=RE(G)\cap (E(G_1)\cup E(G_2))=RE(G_1)\cup RE(G_2).$$
(ii) (a) follows. 
\end{proof}

\section{Proof of Theorem \ref{claw-free-mc}}

Recall that a matching covered graph $G$ is minimal if and only if $RE(G)=\emptyset$.

\begin{lemma}\label{clawfree-mc-3}
A graph $G$ is a  claw-free minimal matching covered graph with $\delta(G)\geq3$ if and only if $G\in\mathcal{F}$.
\end{lemma}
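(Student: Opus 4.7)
The plan is to prove the two implications separately, with sufficiency by induction on the number of compound operations used to build $G\in\mathcal{F}$ and necessity by induction on $|V(G)|$. Sufficiency is the easier direction. If $G\in\mathcal{G}$ then Theorem~\ref{claw-free-bicriticalR(G)emptyset} gives that $G$ is a bicritical claw-free minimal matching covered graph, hence $\delta(G)\geq 3$. Otherwise $G=(G_1\oplus G_2)_\ast$ for some compound operation $\ast$ and two smaller members $G_1,G_2\in\mathcal{F}$; the inductive hypothesis yields that each $G_i$ is claw-free minimal matching covered with $\delta(G_i)\geq 3$, and Lemma~\ref{four-operation-property}(i)--(ii) propagates claw-freeness, matching coveredness, and $RE=\emptyset$ to $G$. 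The condition $\delta(G)\geq 3$ is a routine verification on each of the four operations, driven by the fact that bisimplicial vertices have degree at least $4$ and the endpoints of a bisimplicial edge have degree at least $3$.

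For necessity, let $G$ be a claw-free minimal matching covered graph with $\delta(G)\geq 3$. If $G$ is bicritical, Theorem~\ref{claw-free-bicriticalR(G)emptyset} places $G$ in $\mathcal{G}\subseteq\mathcal{F}$. Otherwise Lemma~\ref{clawfreebicritical-no2barrier} supplies a $2$-barrier $B=\{u,v\}$; by Proposition~\ref{mc-B-stableset}, $B$ is a stable set and $G-B$ has no even components, so $G-B$ consists of exactly two odd components $L_1,L_2$. Both $u$ and $v$ have neighbours in each $L_i$ by $2$-connectedness, and claw-freeness forces $N_G(u)\cap V(L_i)$ and $N_G(v)\cap V(L_i)$ to induce cliques in $L_i$ (a non-edge among those neighbours, together with any neighbour of $u$ or $v$ in the other component, would yield an induced claw centred at $u$ or $v$).

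The crux is to exhibit $G$ as a compound $(G_1\oplus G_2)_\ast$ for one of the four operations in Section~2.2, where $G_i$ is built on $V(L_i)$ together with one or two auxiliary vertices encoding the interaction with $B$. In the generic configuration---both pairs of cliques disjoint and each clique of size at least $2$---I use $V$-attaching, taking the auxiliary vertex $u_i$ of $G_i$ to be bisimplicial with neighbourhood equal to the union of the two cliques in $L_i$. Degenerate configurations, in which a clique shrinks to a singleton or two cliques share a vertex, are handled by $V$-joining, $E$-joining, or $EV$-attaching. In every case the tight cut $\partial(V(L_i))$ (which is a barrier cut by virtue of $B$) together with Lemma~\ref{removable-tight} yields that $G_i$ is matching covered; Lemma~\ref{four-operation-property}(ii) then forces $RE(G_i)=\emptyset$; claw-freeness of $G_i$ is inherited from $G$ together with the bisimplicial structure installed at the auxiliary vertex or edge; and $\delta(G_i)\geq 3$ follows from $G$-degree preservation on $V(L_i)$ and the clique-size bounds at the auxiliary vertex. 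Since $|V(G_i)|<|V(G)|$, the inductive hypothesis yields $G_i\in\mathcal{F}$, whence $G\in\mathcal{F}$.

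The main obstacle will be the structural case analysis certifying that one of the four prescribed operations always applies; in particular, handling common neighbours of $u$ and $v$ in some $L_i$. Any such common neighbour $w$ has, by $\delta(G)\geq 3$, a further neighbour $x\in V(L_i)$, and claw-freeness at $w$---whose neighbourhood contains the non-adjacent pair $\{u,v\}$---forces $x$ to be adjacent to $u$ or $v$. Pushing this local rigidity through $L_i$, together with minimality (so that no edge of $G$ is removable) and Lemma~\ref{K5freeK4}, should produce a bisimplicial edge or vertex that matches exactly one of the four operations. Once this dichotomy is pinned down, the induction closes without further complication.
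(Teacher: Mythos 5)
Your overall architecture coincides with the paper's proof (induction both ways, a $2$-barrier $\{u,v\}$ via Lemma \ref{clawfreebicritical-no2barrier}, two odd components, cliques from claw-freeness, reconstruction of $G$ as a compound), but the part you defer as ``the main obstacle'' is exactly where the paper does its real work, and your plan for it points the wrong way. Common neighbours of $u$ and $v$ in $L_i$ are not a degenerate configuration to be accommodated by local rigidity: they must be \emph{excluded outright}. If $w\in N(u)\cap N(v)\cap V(L_i)$, then since $\partial(V(L_i))$ is a barrier cut (hence tight), $uw$ is a multiple edge of both $C$-contractions $G\{V(L_i)\}$ and $G\{\overline{V(L_i)}\}$, so it is removable in each, and Lemma \ref{removable-tight} makes $uw$ removable in $G$, contradicting $RE(G)=\emptyset$ (the hypotheses $\delta(G)\geq 3$ and $2$-connectedness supply the second neighbour needed for the multiplicity). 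So the disjointness of the two cliques is a one-paragraph consequence of minimality; trying instead to ``push this rigidity through $L_i$'' to realize such a $w$ inside one of the four operations would fail, since every compound produces disjoint cliques at the interface. A second concrete gap: your appeal to Lemma \ref{four-operation-property}(ii) to get $RE(G_i)=\emptyset$ is both circular and insufficient. Circular, because that lemma presupposes $G$ is already exhibited as a compound, which requires the bisimplicial certification you have not yet done; insufficient, because in the E-joining and EV-attaching cases part (ii)(b)--(c) only yields $RE(G_i)\backslash\{e_i\}=\emptyset$, and one must separately show $e_i$ is nonremovable in $G_i$. The paper instead proves $RE(G_i)=\emptyset$ directly: multiple edges of the contraction handle everything except the single edge $w_ix_i$ when $u$ (or $v$) has a unique neighbour $x_i$ in $L_i$, and that edge is killed by Lemma \ref{K5freeK4}(ii).

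There is also an ordering error in your induction. You plan to certify the compound structure first and apply induction second, but the bisimplicial-edge certification in the degree-$3$ cases (the paper's Claim 2, that $w_1v_1$ is bisimplicial in $G_1$) cannot be extracted from claw-freeness of $G$ alone: nothing a priori prevents the clique at $v_1$ from meeting $\{u_1,u_2\}$ or from being too large. The paper first establishes that each $G_i$ is claw-free, minimal matching covered with $\delta(G_i)\geq3$, \emph{then} applies the induction hypothesis to place $G_i\in\mathcal{F}$, and only then invokes Lemma \ref{4-degree-2K2} ($K_4$-freeness, degrees $3$ or $4$, and $2K_2$ neighbourhoods at degree-$4$ vertices) to rule out the bad intersections and conclude bisimpliciality; your sketch cannot close without this inversion. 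Finally, your blanket description of $G_i$ as ``$V(L_i)$ plus auxiliary vertices'' misses the case $|\partial(V(L_1))|=2$ (both cliques in $L_1$ singletons $x_1,y_1$): there the V-joining decomposition requires first showing $\{x_1,y_1\}$ is itself a $2$-barrier (via Proposition \ref{2-barrier}) and taking $G_1=G\{V(L_1)\backslash\{x_1,y_1\},w_1\}$, i.e., contracting strictly more than the complement of $L_1$.
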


\begin{proof}
Suppose that $G\in\mathcal{F}$. By induction on $|V(G)|$. If $G\in\mathcal{G}$, the result holds by Theorem \ref{claw-free-bicriticalR(G)emptyset}. Now assume that $G$ is a compound of  two graphs $G_1$ and $G_2$ in $\mathcal{F}$. By the induction hypothesis, $G_i$ is a claw-free minimal matching covered graph with $\delta(G_i)\geq3$, $i=1,2$. Therefore, $\delta(G)\geq3$ and $RE(G_1)=RE(G_2)=\emptyset$. By Lemma \ref{four-operation-property}, $G$ is a claw-free matching covered graph with $RE(G)=\emptyset$, that is, $G$ is a claw-free minimal matching covered graph.

Conversely, suppose that $G$ is a claw-free minimal matching covered graph with $\delta(G)\geq3$. Then $RE(G)=\emptyset$, so $G$ is a simple graph. We show, by induction on $|V(G)|$, that $G\in\mathcal{F}$.
If $G$ is bicritical, Theorem \ref{claw-free-bicriticalR(G)emptyset} implies that $G\in \mathcal{G}$. The assertion holds.

Now suppose that  $G$ is not bicritical. By Lemma \ref{clawfreebicritical-no2barrier}, $G$ has a 2-barrier $\{u,v\}$.
Since $G$ is matching covered, by Proposition \ref{mc-B-stableset}, we have $uv\notin E(G)$, and $G-\{u,v\}$ has no even components. Then $G-\{u,v\}$ has only two odd components, say $L_1$ and $L_2$.
Since $G$ is 2-connected, each of $u$ and $v$ has at least one neighbour in $L_1$ and  at least one neighbour in $L_2$.
We first show the following claim.
\vspace{1.5mm}

\textbf{Claim 1.} $N(u)\cap V(L_i)$ and $N(v)\cap V(L_i)$ are two disjoint cliques, $i=1,2$.

Since $G$ is claw-free, $N(u)\cap V(L_i)$ and $N(v)\cap V(L_i)$ are two cliques, $i=1,2$. We  show that these two cliques are disjoint.
Suppose, to the contrary, that $w\in N(u)\cap N(v)\cap V(L_i)$.
Then $|V(L_i)|\geq3$ because $\delta(G)\geq3$, $G$ is simple and $L_i$ is odd.
Since $G$ is 2-connected, one of $u$ and $v$, say $u$, has a neighbour in $L_i$ except $w$. Observe that $uw$ is a multiple edge of each of $G\{V(L_i)\}$ and $G\{\overline{V(L_i})\}$, so is a removable edge of each of them. Note  that $\partial(V(L_i))$ is a barrier cut of $G$, so is a tight cut of $G$. Lemma \ref{removable-tight} implies that  $uw$ is removable in $G$,  contradicting the assumption that $RE(G)=\emptyset$.
This proves Claim 1.
\vspace{1.5mm}

Since $RE(G)=\emptyset$, Lemma \ref{K5freeK4}(i) implies that $G$ is $K_5$-free. Thus each of $u$ and $v$ has at most three neighbours in $L_1$ and $L_2$, respectively. Therefore, $2\leq|\partial(V(L_i))|\leq6$, $i=1,2$. Because $d(u)\geq3$ and $d(v)\geq3$, at most one of $|\partial(V(L_1))|$ and $|\partial(V(L_2))|$ is 2.
According to the cardinality of  $\partial(V(L_1))$ and $\partial(V(L_2))$,  we consider the following two cases.
In each case, we will find two graphs $G_1$ and $G_2$ that satisfy the following  property:
$G_i$ is a claw-free minimal matching covered graph with $\delta(G_i)\geq3$, $i=1,2$.
\vspace{1.5mm}

\textbf{Case 1.} $|\partial(V(L_i))|=2$, $i=1$ or 2.

Suppose, without loss of generality, that  $|\partial(V(L_1))|=2$.
Let $x_1$ and $y_1$ be the neighbour of $u$ and $v$ in $L_1$, respectively. By Claim 1, $x_1\neq y_1$, and $N(u)\cap V(L_2)$ and $N(v)\cap V(L_2)$ are two disjoint cliques. Since $\delta(G)\geq3$, we have $|N(u)\cap V(L_2)|\geq2$ and $|N(v)\cap V(L_2)|\geq2$. Since $L_1$ is odd, $L_1-\{x_1,y_1\}$ has at least one vertex. Therefore, $\{x_1,y_1\}$ is a 2-vertex cut of $G$, and $G-\{x_1,y_1\}$ has an odd component $G[V(L_2)\cup \{u,v\}]$. By Proposition \ref{2-barrier}, $\{x_1,y_1\}$ is a 2-barrier of $G$.
Thus $L_1-\{x_1,y_1\}$ is the other odd component of $G-\{x_1,y_1\}$, and $x_1y_1\notin E(G)$ by Proposition \ref{mc-B-stableset}.
From Claim 1, we see that $N(x_1)\cap V(L_1)$ and $N(y_1)\cap V(L_1)$ are  two disjoint cliques. Moreover, since $\delta(G)\geq3$, we have $|N(x_1)\cap V(L_1)|\geq2$ and $|N(y_1)\cap V(L_1)|\geq2$.
Let $G_1=G\{V(L_1)\backslash\{x_1,y_1\},w_1\}$ and $G_2=G\{V(L_2),w_2\}$. Then $w_i$ is a bisimplicial vertex of $G_i$ and $d_{G_i}(w_i)\geq4$, $i=1,2$. Thus $G$ can be obtained from $G_1$ and $G_2$ by V-joining at $w_1$ and $w_2$. By Lemma \ref{four-operation-property},
$G_i$ is a claw-free matching covered graph with $RE(G_i)=\emptyset$, that is, $G_i$ is a claw-free minimal matching covered graph, $i=1,2$.
Since $\delta(G)\geq3$, we have $\delta(G_1)\geq3$ and $\delta(G_2)\geq3$. Consequently,   $G_1$ and $G_2$ satisfy the required property.
\vspace{1.5mm}

\textbf{Case 2.} $|\partial(V(L_i))|\geq3$, $i=1,2$.

Let $G_i=G\{V(L_i),w_i\}$. Then $\delta(G_i)\geq3$. Recall that $\partial(V(L_i))$ is a barrier cut of $G$.
Then the graph $G_i$ is matching covered. By Claim 1, $N(u)\cap V(L_i)$ and $N(v)\cap V(L_i)$ are  two disjoint cliques, so $G_i$ has no claw with centre $w_i$.
Since $G$ is claw-free, so does $G_i$. %Furthermore, if $G$ contains $K_4$ as a subgraph, then one of $G_1$ and $G_2$ contains $K_4$ as a subgraph.
If each of $u$ and $v$ has at least two neighbours in $L_i$, then each edge of $\partial(V(L_i))$ is a multiple edge of $G\{\overline{V(L_i})\}$, so is removable in $G\{\overline{V(L_i})\}$.
Since $RE(G)=\emptyset$, by Lemma \ref{removable-tight},  we have $RE(G_i)=\emptyset$.
Now suppose, without loss of generality, that $u$ has exactly one neighbour $x_i$ in $L_i$.
Since $|\partial(V(L_i))|\geq3$, $v$ has at least two neighbours in $L_i$.
Recall that $v$ has at most three neighbours in $L_i$. Then $v$ has either two or three neighbours in $L_i$.
Note that all the edges of $\partial(V(L_i))$ incident with $v$  are multiple edges in $G\{\overline{V(L_i})\}$. Again by Lemma \ref{removable-tight} and the fact that $RE(G)=\emptyset$, we have $RE(G_i)\subseteq\{w_ix_i\}$. However, Lemma \ref{K5freeK4}(ii) implies that $w_ix_i$ is nonremovable in $G_i$. It follows that $RE(G_i)=\emptyset$. Consequently, $G_i$ satisfies the required property.
\vspace{1.5mm}

By the induction hypothesis, both $G_1$ and $G_2$ belong to $\mathcal{F}$. By Lemma \ref{4-degree-2K2}, each of $G_1$ and $G_2$ is either isomorphic to $K_4$ or $K_4$-free. Moreover, for any vertex $x^{*}\in V(G_i)$, we have $d_{G_i}(x^{*})=3$ or 4, and $G_i[N_{G_i}(x^{*})]$ is the union  of two disjoint $K_2$s if $d_{G_i}(x^{*})=4$, $i=1,2$.
Therefore, each of $u$ and $v$ has at most two neighbours in $L_1$ and $L_2$, respectively, so has degree  3 or 4 in $G$.

We proceed to show that $G\in \mathcal{F}$. In Case 1, $G$ is obtained from $G_1$ and $G_2$ by V-joining. We are done.
Now we consider Case 2. If $d(u)=d(v)=4$, then each of $u$ and $v$ has exactly two neighbours in $L_1$ and $L_2$, respectively. So $d_{G_i}(w_i)=4$ and $w_i$ is a bisimplicial vertex of $G_i$, $i=1,2$. Consequently,  $G$ can be obtained from $G_1$ and $G_2$ by V-attaching at $w_1$ and $w_2$. Then $G\in \mathcal{F}$.
Now, suppose, without loss of generality, that $d(v)=3$ and $v$ has exactly one neighbour, say $v_1$, in $L_1$. Then $|\partial(V(L_1))|=3$ and $u$ has exactly two neighbours in $L_1$, say $\{u_1,u_2\}$.
By Claim 1, $u_1u_2\in E(G)$. Recall that $G_i=G\{V(L_i),w_i\}$, $i=1,2$. We  prove the following claim.
\vspace{1.5mm}

\textbf{Claim 2.} $w_1v_1$ is a bisimplicial edge of $G_1$. %$|N(u)\cap N(v_1)\cap V(L_1)|=0$ or 2. If it is equal to 2, then $L_1$ is isomorphic to $C_3$.

Recall that each edge of $K_4$ is a bisimplicial edge. The result holds when $G_1$ is isomorphic to $K_4$. Now suppose that $G_1$ is not isomorphic to $K_4$. Then $G_1$ is $K_4$-free.
As $G$ is claw-free, $N_G(v_1)\cap V(L_1)$ is a clique. Since $G_1$ is $K_4$-free and $\delta(G)\geq3$, we have $|N_G(v_1)\cap V(L_1)|=2$. Let $N_G(v_1)\cap V(L_1)=\{v_{11},v_{12}\}$.
Since $G$ is claw-free, we have $v_{11}v_{12}\in E(G)$. If $\{v_{11},v_{12}\}=\{u_1,u_2\}$, then $G_1$ contains $K_4$ as a subgraph induced by $\{w_1,u_1,u_2,v_1\}$, a contradiction. 
Thus $\{v_{11},v_{12}\}\neq\{u_1,u_2\}$. Assume, without loss of generality, that $u_2=v_{12}$ and $u_1\neq v_{11}$. Then  $\{u_1,w_1,v_1,v_{11}\}\subseteq N_{G_1}(u_2)$. Recall that $d_{G_1}(u_2)=3$ or 4. We have  $d_{G_1}(u_2)=4$. Thus $G_1[N_{G_1}(u_2)]$ is the union of two  $K_2$s. But this contradicts the fact that
$G_1[N_{G_1}(u_2)]$ is connected.
Therefore, $\{v_{11},v_{12}\}\cap\{u_1,u_2\}=\emptyset$, that is, $w_1v_1$ is a bisimplicial edge of $G_1$. This proves  Claim 2.
\vspace{1.5mm}

If $d(u)=4$, by Claim 1,  $w_2$ is a bisimplicial vertex of $G_2$. Then $G$ can be obtained from $G_1$ and $G_2$ by EV-attaching at $w_1v_1$ and $w_2$.  Thus $G\in\mathcal{F}$.

If $d(u)=3$,   let $u_3$ be the only neighbour of $u$ in $L_2$. By a similar argument as that in the proof of  Claim 2, we can show that $w_2u_3$ is a bisimplicial edge of $G_2$. Then $G$ can be obtained from $G_1$ and $G_2$ by E-joining at $w_1v_1$ and $w_2u_3$. Thus $G\in\mathcal{F}$. This completes the proof of Lemma \ref{clawfree-mc-3}.
\end{proof}

\begin{lemma}\label{bisubdivision}
Suppose that $G$ is a bisubdivision of a graph $H$ different from $K_2$ at an edge $e$.  Then
the following statements hold.
\vspace{-8pt}
\begin{enumerate}[(\romannumeral1)]
\setlength{\itemsep}{-1ex}
\item  If $H$ is claw-free and matching covered,  and $e$ is a ridge of $H$, then $G$ is  claw-free and matching covered.
\item  If $G$ is  claw-free and matching covered, then $H$ is claw-free and matching covered, and $RE(G)=RE(H)\backslash\{e\}$.
\end{enumerate}
\end{lemma}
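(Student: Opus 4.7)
The plan is as follows. Write $e = u_1 u_2$ and let the replacement path in $G$ be $P \colon u_1 = x_0, x_1, \ldots, x_{2k+1} = u_2$ with $k \geq 1$, so that each internal vertex $x_i$ has degree exactly $2$ in $G$. Both parts hinge on one observation about any perfect matching $M$ of $G$: the degree-$2$ constraints at $x_1, \ldots, x_{2k}$ force the path edges used by $M$ to form one of exactly two patterns, the \emph{even pattern} $\{x_0 x_1, x_2 x_3, \ldots, x_{2k} x_{2k+1}\}$ (which matches $u_1, u_2$ internally to $P$) or the \emph{odd pattern} $\{x_1 x_2, x_3 x_4, \ldots, x_{2k-1} x_{2k}\}$ (which leaves $u_1, u_2$ to be matched through $E(H) \setminus \{e\}$). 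Consequently, $M \cap (E(G) \setminus E(P))$, together with $e$ in the even case, is always a perfect matching of $H$; conversely, any PM of $H$ extends to a PM of $G$ by attaching the even or odd pattern according to whether it uses $e$ or not.

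For part~(i), I check claw-freeness of $G$ by cases on the centre: internal $x_i$ have degree $2$, and any centre $v \in V(H) \setminus \{u_1, u_2\}$ has $N_G(v) = N_H(v)$ and so inherits the claw into $H$. A claw centred at $u_1$ must use $x_1$ as a leaf (otherwise it already sits in $H$); the ridge hypothesis guarantees that $u_2$ shares no $H$-neighbour with $u_1$, so swapping $x_1$ for $u_2$ exhibits a claw in $H$, a contradiction. Matching-coveredness then follows from the two-pattern observation: $H$ admits a PM containing $e$ and a PM not containing $e$ (the latter because $d_H(u_1) \geq 2$ and every edge of $H$ is admissible), and extending them by the corresponding patterns supplies PMs of $G$ through any prescribed edge.

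For part~(ii), claw-freeness of $H$ is the contrapositive of the argument above: a claw of $H$ either avoids $\{u_1, u_2\}$ and lifts unchanged to $G$, or, centred at $u_1$ with $u_2$ as a leaf, becomes a claw of $G$ upon replacing $u_2$ by $x_1$ (degree $2$, non-adjacent to the other two leaves). Matching-coveredness of $H$ follows from the restriction half of the observation: admissibility of $x_0 x_1$ and $x_1 x_2$ in $G$ produces PMs of $H$ that respectively contain and avoid $e$, and admissibility of any $f \in E(H) \setminus \{e\}$ in $H$ comes from restricting any PM of $G$ through $f$.

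For the identity $RE(G) = RE(H) \setminus \{e\}$, I argue in two stages. First, no path edge is removable: for $k \geq 1$ the even pattern has at least two edges that are pairwise mutually dependent (every PM of $G$ containing one even-pattern edge contains them all), while for the odd pattern any edge $u_1 w \in E(H) \setminus \{e\}$ depends on every odd-pattern edge (PMs containing $u_1 w$ cannot use $x_0 x_1$ and therefore select the odd pattern), so Lemma~\ref{depend-on} rules every path edge out of $RE(G)$. Second, for $f \in E(H) \setminus \{e\}$, the graph $G - f$ is a bisubdivision of $H - f$ at $e$, and the matching-covered half of the pattern observation --- which does not use the ridge hypothesis --- yields $G - f$ matching covered $\iff H - f$ matching covered, hence $f \in RE(G) \iff f \in RE(H)$. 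The main obstacle I expect is largely bookkeeping: one must ensure that the matching-covered equivalence between $G$ and $H$ is proved without ever invoking the ridge of $e$ (so that it can be iterated through $G - f$ and $H - f$), and the small case $k = 1$, where the odd pattern consists of the single edge $x_1 x_2$, needs the $u_1 w$-dependence argument to rule that edge out explicitly.
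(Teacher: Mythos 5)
Your proposal is correct, and it diverges from the paper's proof in an interesting way on the key identity $RE(G)=RE(H)\backslash\{e\}$, while the matching-coveredness and claw-freeness parts coincide in substance: your two-pattern dichotomy for perfect matchings on the subdivision path is exactly the statement that $\partial_G(X)$, with $X=V(P_e)\backslash\{u_2\}$, is a tight cut, which the paper exploits via the splicing lemma and the contraction $G\{X\}$, whose underlying simple graph is an even cycle. For the removable-edge identity, the paper routes everything through Lemma \ref{removable-tight}: an $f\in RE(H)\backslash\{e\}$ with $f\notin\partial_G(X)$ transfers immediately, while $f\in\partial_G(X)$ is incident with an end of $e$ of degree at least $3$ and is therefore a multiple edge of $G\{X\}$, hence removable there; path edges are dismissed in one line because each is incident with a vertex of degree $2$. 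You avoid Lemma \ref{removable-tight} entirely by observing that $G-f$ is again a bisubdivision of $H-f$ at $e$ and re-running your ridge-free matching-covered equivalence, which yields $f\in RE(G)$ if and only if $f\in RE(H)$ uniformly, with no case split on whether $f$ meets $u_1$ or $u_2$; and you rule out path edges by dependence (Lemma \ref{depend-on}), correctly isolating the $k=1$ case where the odd pattern is the single edge $x_1x_2$, handled via the dependence of $u_1w$ on it (such a $w$ exists since $G$, being matching covered and different from $K_2$, is $2$-connected). The paper's version is shorter given the tight-cut toolkit it reuses throughout; yours is more elementary and self-contained, at the cost of re-verifying hypotheses for the deleted graphs (that $H-f\neq K_2$ and that $d_{H-f}(u_1)\geq 2$ whenever $H-f$ is matching covered --- both hold, the first since $|V(H)|\geq 4$, the second by $2$-connectivity). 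One glossed point in (i): a claw of $G$ centred at $v\in V(H)\backslash\{u_1,u_2\}$ lifts to $H$ only if its leaf set does not contain both $u_1$ and $u_2$, and that case is excluded precisely because the ridge hypothesis forbids a common neighbour of $u_1$ and $u_2$ --- a one-line addition, not a gap.
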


\begin{proof}
Since $G$ is a bisubdivision  of $H$ at the edge $e$, $G$ is obtained from $H$ by replacing $e$ by an odd path $P_e$ with length at least three. Let $e=ab$ and $X=V(P_e)\backslash\{b\}$. Then $H$ is isomorphic to $G\{\overline{X}\}$. Note that $G$ is a splicing of $G\{\overline{X}\}$ and $G\{X\}$. We are now ready to prove (i) and (ii).

(i) Assume that $H$ is claw-free and matching covered, and $e$ is a ridge of $H$.
Since $H$ is different from $K_2$, $H$ is 2-connected. Then $\delta(H)\geq2$. Therefore, the underlying simple graph of $G\{X\}$ is an even cycle. So $G\{X\}$ is matching covered.
By Lemma \ref{splicing}, $G$ is matching covered. Recall that each edge of $K_4$ is a ridge. If $H$ is isomorphic to $K_4$, then $G$ is claw-free. Now suppose that $H$ is not isomorphic to $K_4$.  Since  $ab$ is a ridge of $H$,  $a$ and $b$ have no neighbours in common. Since $H$ is claw-free, $N_H(a)\backslash\{b\}$ and $N_H(b)\backslash\{a\}$ are two disjoint cliques. This implies that $G$ has no claws with centres $a$ and $b$. Thus $G$ is claw-free.

(ii) Assume that $G$ is claw-free and matching covered.
Then $H$ is also claw-free. Since $P_{e}-b$ is an   even path of $G$ with internal vertices of degree 2, for each perfect matching $M$ of $G$, we have $|M\cap \partial_G(X)|=1$. So $\partial_G(X)$ is a tight cut of $G$.
It follows that $H$ is matching covered.

Note that each edge of $P_e$ is incident with a vertex of degree 2, so is nonremovable in $G$.
Therefore, if $f\in RE(G)$, then $f\in E(H)\backslash\{e\}$. By Lemma \ref{removable-tight}, we have $f\in RE(H)\backslash\{e\}$. So $RE(G)\subseteq RE(H)\backslash\{e\}$. Now assume that $f\in RE(H)\backslash\{e\}$. If $f\notin \partial(X)$, Lemma \ref{removable-tight} implies that $f\in RE(G)$. If $f\in \partial(X)$, then $f$ is incident with $a$ in $H$. Since $f$ is removable in $H$, we have  $d_H(a)\geq3$. So $d_G(a)=d_H(a)\geq3$. It follows that $f$ is a multiple edge of $G\{X\}$, so is a removable edge of $G\{X\}$. Again by Lemma \ref{removable-tight}, we have $f\in RE(G)$. It follows that $RE(H)\backslash\{e\}\subseteq RE(G)$. Consequently, $RE(G)=RE(H)\backslash\{e\}$.
\end{proof}

\begin{lemma}\label{vertex-expansion}
Suppose that $G$ is an expansion of a graph $H$ at  a bisimplicial vertex. Then the following statements hold.
\vspace{-8pt}
\begin{enumerate}[(\romannumeral1)]
\setlength{\itemsep}{-1ex}
\item  $G$ is claw-free and matching covered if and only if $H$ is claw-free and matching covered.
\item  If $G$ is claw-free and matching covered, then $RE(G)=RE(H)$.
\end{enumerate}
\end{lemma}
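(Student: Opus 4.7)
The plan is to reduce the general case (added even path of length $2k$, $k \geq 1$) to the base case $k=1$ using Lemma \ref{bisubdivision}, and then handle the base case via an explicit tight cut. Let $u$ be the bisimplicial vertex of $H$ with $N_H(u) = K_1 \cup K_2$ (two disjoint cliques, each of size at least two), let $u', u''$ be the split vertices of $G$ joined to $K_1$ and $K_2$ respectively, and let $u' = w_0, w_1, \ldots, w_{2k} = u''$ be the added even path. Write $G_1$ for the expansion of $H$ at $u$ with the shortest possible path $u' - \overline{u} - u''$ of length $2$. For $k \geq 2$, I would observe that $G$ is a bisubdivision of $G_1$ at the edge $u'\overline{u}$, and that this edge is a ridge of $G_1$ since the candidate common neighbours lie in $(K_1 \cup \{\overline{u}\}) \cap \{u', u''\} = \emptyset$. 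Applying Lemma \ref{bisubdivision}(i)--(ii) transfers both claw-freeness/matching-coveredness and removability between $G$ and $G_1$, with $RE(G) = RE(G_1) \setminus \{u'\overline{u}\} = RE(G_1)$ (the subtracted edge is already nonremovable in $G_1$ since $\overline{u}$ has degree $2$). This reduces everything to the case $G = G_1$.

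For the base case, introduce the $3$-set $X^{*} = \{u', \overline{u}, u''\}$ and verify, under the assumption that $G_1$ has a perfect matching, that $\partial(X^{*})$ is a tight cut: every perfect matching must match $\overline{u}$ to $u'$ or $u''$ (its only two neighbours), which in turn forces the other split vertex to be matched via exactly one edge of $\partial(X^{*})$ into $K_2$ or $K_1$. A direct computation then gives $G_1\{\overline{X^{*}}\} \cong H$ (contracting $X^{*}$ restores the vertex $u$ with neighbourhood $K_1 \cup K_2$), while $G_1\{X^{*}\}$ is an explicit $4$-vertex multigraph $J$ on $\{u', \overline{u}, u'', y\}$ with edges $u'\overline{u}, \overline{u}u''$ and $|K_1|$ (resp.\ $|K_2|$) parallel edges joining $u'$ (resp.\ $u''$) to $y$; this $J$ is directly verified to be claw-free and matching covered.

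For statement (i), the forward direction proceeds by viewing $G_1$ as the splicing of $H$ (at $u$) and $J$ (at $y$) via the natural bijection of edge cuts, so Lemma \ref{splicing} yields matching-coveredness; claw-freeness is checked vertex by vertex: any three neighbours of $u'$ lie in $K_1 \cup \{\overline{u}\}$ and must contain two elements of the clique $K_1$, so $u'$ is not a claw-centre, and symmetrically for $u''$; $\overline{u}$ has degree $2$; all other vertices inherit their neighbourhoods from $H$ up to renaming $u$ as $u'$ or $u''$. Conversely, tightness of $\partial(X^{*})$ implies that both contractions are matching covered, in particular $H = G_1\{\overline{X^{*}}\}$, and $H$ remains claw-free since the contraction merely identifies $u', u''$ into $u$ without creating new adjacencies. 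For statement (ii), I would apply Lemma \ref{removable-tight} edge by edge: edges of $H$ not incident to $u$ appear only in $G_1\{\overline{X^{*}}\} = H$, so removability transfers directly; edges $u'v$ with $v \in K_1$ (and symmetrically $u''v$ with $v \in K_2$) correspond to $uv \in E(H)$ and appear in $J$ as parallel edges, which are removable in $J$ since $|K_i| \geq 2$, so their removability reduces to that of $uv$ in $H$; and the two new edges $u'\overline{u}, u''\overline{u}$ appear only in $J$ and are nonremovable there because $\overline{u}$ has degree $2$ in $J$. Combining these observations gives $RE(G_1) = RE(H)$ under the natural identification of edges, which together with the earlier reduction yields $RE(G) = RE(H)$.

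The main obstacle I anticipate is the careful bookkeeping in the base case: verifying that $\partial(X^{*})$ is genuinely tight and that the two contractions are exactly as claimed (rather than merely up to multiple edges), both of which rely critically on the hypothesis $|K_1|, |K_2| \geq 2$ built into the bisimplicial structure. The same hypothesis is also what ensures the parallel edges in $J$ are in fact removable and thus makes the removability analysis pass cleanly through the tight cut decomposition.
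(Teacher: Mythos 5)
Your proof is correct, and its kernel is the same as the paper's: a tight cut separating the added path from the rest, the removable/nonremovable bookkeeping via Lemma \ref{removable-tight}, the observation that the cut edges become parallel edges (hence removable) in the small contraction whose underlying simple graph is an even cycle, and Lemma \ref{splicing} for the forward direction of (i). The one genuine structural difference is your preliminary reduction: you first shrink the even path of length $2k$ to length $2$ by viewing $G$ as a bisubdivision of $G_1$ at $u'\overline{u}$, checking that this edge is a ridge and is nonremovable in $G_1$ (degree-$2$ vertex $\overline{u}$), so that Lemma \ref{bisubdivision} gives $RE(G)=RE(G_1)$ and transfers claw-freeness and matching-coveredness both ways; this reduction is valid as stated. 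The paper avoids it entirely by taking $X=V(P_x)$ for the path of arbitrary even length in a single step: its cut $\partial_G(X)$ is exactly your $\partial(X^{*})$ specialized to $k=1$, and the identical tightness, contraction, and multiple-edge arguments go through verbatim for every $k$, since $G\{X\}$ always has an even cycle as underlying simple graph and the split vertices always have at least two neighbours outside $X$. So your route buys a smaller, fully explicit base case (your multigraph $J$) at the cost of an extra layer of induction through Lemma \ref{bisubdivision} that the direct argument shows to be unnecessary; your verification that the parallel-edge and tightness claims rest on $|K_1|,|K_2|\geq 2$ is exactly the right point of care, and is the same hypothesis the paper's one-shot proof relies on.
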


\begin{proof}
Suppose that $G$ is an expansion of $H$ at  a bisimplicial vertex $x$, and $x'$ and $x''$ are the two split vertices. Then $N_H(x)$ is the union of two disjoint cliques such that each of them has at least two vertices,  $x'$ is adjacent to each vertex of one clique and $x''$ is adjacent to each vertex of the other clique, and the $x'x''$-path $P_x$ is an even path with length at least two.
Let $X=V(P_x)$. Then $H$ is isomorphic to $G\{\overline{X}\}$, and the underlying simple graph of $G\{X\}$ is an even cycle. So $G\{X\}$ is matching covered.

Assume that $H$ is claw-free and matching covered. By Lemma \ref{splicing}, $G$ is matching covered. Since $H$ is claw-free, so does $G$.
Conversely, assume that $G$ is claw-free and matching covered.
Then $H$ is also claw-free. Note that, for each perfect matching $M$ of $G$, we have $|M\cap \partial_G(X)|=1$, so $\partial_G(X)$ is a tight cut of $G$.
Then $H$ is matching covered. Since each edge $f$ of $P_x$ is incident with a vertex of degree 2, $f$ is nonremovable in $G$. Thus $RE(G)\subseteq RE(H)$.  Each edge of $\partial_G(X)$ is a multiple edge of $G\{X\}$, so is a removable edge of $G\{X\}$. By Lemma \ref{removable-tight}, we have $RE(H)\subseteq RE(G)$. Consequently, $RE(G)=RE(H)$.
\end{proof}

\begin{lemma}\label{expansion}
If $G$ is an expansion of a graph in $\mathcal{F}$, then $G$ is a claw-free minimal matching covered graph. %with $RE(G)=\emptyset$.
\end{lemma}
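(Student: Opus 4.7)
My plan is to proceed by induction on the number $k$ of expansion operations (i.e., bisubdivisions of ridges and expansions of bisimplicial vertices) used to obtain $G$ from some graph $H\in\mathcal{F}$. The statement essentially says that each single operation preserves the property of being a claw-free minimal matching covered graph, and iterating this yields the result; all the real work is already packaged into Lemmas \ref{bisubdivision} and \ref{vertex-expansion}.

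For the base case $k=0$, we have $G=H\in\mathcal{F}$, so Lemma \ref{clawfree-mc-3} immediately gives that $G$ is a claw-free minimal matching covered graph.

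For the inductive step, let $G_0$ denote the graph obtained from $H$ by the first $k-1$ operations, so $G$ arises from $G_0$ by one further operation. The induction hypothesis supplies that $G_0$ is a claw-free matching covered graph with $RE(G_0)=\emptyset$; note that $G_0$ has at least four vertices (since the smallest graphs in $\mathcal{F}$ are $K_4$ and $\overline{C_6}$, and expansions only add vertices), so $G_0\neq K_2$. If the final operation is the bisubdivision of a ridge $e$ of $G_0$, then Lemma \ref{bisubdivision}(i) applies (its hypotheses hold because $G_0$ is claw-free and matching covered, differs from $K_2$, and $e$ is a ridge by the definition of the operation) and yields that $G$ is claw-free and matching covered; part (ii) of the same lemma then gives $RE(G)=RE(G_0)\setminus\{e\}=\emptyset$. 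If instead the last operation is an expansion at a bisimplicial vertex of $G_0$, Lemma \ref{vertex-expansion}(i) yields that $G$ is claw-free and matching covered, and part (ii) gives $RE(G)=RE(G_0)=\emptyset$. In either subcase, $G$ is a claw-free minimal matching covered graph, and the induction closes.

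The main (and only minor) obstacle is the ordering of the invocations: in both Lemmas \ref{bisubdivision} and \ref{vertex-expansion}, part (ii) — which supplies the needed equalities on $RE$ — is only valid once one already knows $G$ is claw-free and matching covered. So at each inductive step part (i) must be used first to establish the structural property, and then part (ii) can be applied to conclude that $G$ inherits minimality from $G_0$.
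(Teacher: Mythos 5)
Your proof is correct and takes essentially the same route as the paper's: both arguments induct (you on the number of operations, the paper on $|V(G)|$ --- an immaterial difference, since each operation strictly increases the vertex count), with the base case settled by Lemma~\ref{clawfree-mc-3} and the inductive step by peeling off the last operation and applying Lemmas~\ref{bisubdivision} and~\ref{vertex-expansion}. Your observation that part~(i) of each of those lemmas must be invoked before part~(ii) (to certify that $G$ is claw-free and matching covered before using the $RE$ equalities) is a correct and careful reading of their hypotheses, and your check that $G_0\neq K_2$ is the right detail to verify.
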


\begin{proof}
By induction on $|V(G)|$. Suppose that $G$ is an expansion of a graph $H\in\mathcal{F}$. Then there is a sequence of graphs $(G_1,G_2,\ldots,G_{r+1})$ such that (i) $G_1=H$ and $G_{r+1}=G$, (ii) $G_{i+1}$ is either a bisubdivision of $G_i$ at a ridge or an expansion of $G_i$ at a bisimplicial vertex, $1\leq i\leq r$. If $r=0$, then $G=H$. By Lemma \ref{clawfree-mc-3}, $G$ is a claw-free minimal matching covered graph.
Now assume that $r\geq1$. Note that $G_i$ is an expansion of $H$, $2\leq i\leq r+1$.
By the induction hypothesis, $G_r$ is a claw-free minimal matching covered graph, so $RE(G_r)=\emptyset$. Using Lemmas \ref{bisubdivision} and \ref{vertex-expansion}, $G$ is a claw-free matching covered graph with $RE(G)=\emptyset$,
that is, $G$ is a claw-free minimal matching covered graph.
\end{proof}

{\it Proof of Theorem \ref{claw-free-mc}.}
Suppose that $G$ is an expansion of a graph in $\mathcal{F}$. By Lemma \ref{expansion}, $G$ is a claw-free minimal matching covered graph. The result holds.

Conversely, assume that $G$ is a claw-free minimal matching covered graph, and it is different from $K_2$ and even cycle.  Then $RE(G)=\emptyset$.
By induction on $|V(G)|$. If $\delta(G)\geq3$, Lemma \ref{clawfree-mc-3} implies that $G\in\mathcal{F}$. The result holds.
Since $G$ is different from $K_2$,  we may assume that $G$ has a vertex of degree 2.
Let $P=u_1u_2\ldots u_k$ be a maximal  path of $G$ with internal vertices of degree 2 in $G$.
Then $k\geq3$ and $|E(P)|\geq2$.
Since $G$ is different from even cycle, both $u_1$ and $u_k$ have degree at least 3 in $G$.

We first assert that $u_1u_k\notin E(G)$.  Suppose, to the contrary, that $u_1u_k\in E(G)$.  If $P$ is even, then $u_1u_k$ is inadmissible in $G$, contradicting the assumption that $G$ is matching covered.
If $P$ is odd,  let $X=V(P)\backslash \{u_k\}$.
Then, for each perfect matching $M$ of $G$, we have $|M\cap\partial_G(X)|=1$, so $\partial_G(X)$ is a tight cut of $G$. Since $d_G(u_1)\geq 3$, $u_1u_k$ is a multiple edge of the two graphs $G\{X\}$ and $G\{\overline{X}\}$, so is removable in each of them. By Lemma \ref{removable-tight}, $u_1u_k$ is removable in $G$, contradicting the fact that $RE(G)=\emptyset$. The assertion holds.

Let $H$ be the graph obtained from $G-(V(P)\backslash \{u_1,u_k\})$ by adding a new edge $e^{*}$ that connects $u_1$ and $u_k$ if $P$ is odd, and by identifying $u_1$ and $u_k$ into a new vertex $x^{*}$ otherwise.
Then $H$ is distinct from $K_2$ and even cycle.
Let $N_1=N_G(u_1)\cap (V(G)\backslash V(P))$ and $N_2=N_G(u_k)\cap (V(G)\backslash V(P))$.
Since $G$ is claw-free, $d_G(u_1)\geq 3$, $d_G(u_k)\geq 3$ and $u_1u_k\notin E(G)$,  we see that  both $N_1$ and $N_2$ are two cliques, and each of them has at least two vertices. We proceed the proof by distinguishing whether $P$ is even or not.

If $P$ is even, then $N_1\cap N_2=\emptyset$. Otherwise, assume that $w\in N_1\cap N_2$.
Let $Y=V(P)$. Then, for each perfect matching $M$ of $G$, we have $|M\cap\partial_G(Y)|=1$, so $\partial_G(Y)$ is a tight cut  of $G$.
Note that $wu_1$ is a multiple edge of $G\{Y\}$ and $G\{\overline{Y}\}$, so is  removable in each of them.
By Lemma \ref{removable-tight}, $wu_1$ is removable in $G$, contradicting the fact that $RE(G)=\emptyset$.
Thus $x^{*}$ is a bisimplicial vertex of $H$, so $G$ is an expansion of $H$ at the bisimplicial vertex $x^{*}$.
By Lemma \ref{vertex-expansion}, $H$ is a claw-free matching covered graph with $RE(H)=\emptyset$, that is, $H$ is a claw-free minimal matching covered graph.
By  induction hypothesis, $H$ is an expansion of a graph in $\mathcal{F}$. Consequently, $G$ is also an expansion of a graph in $\mathcal{F}$.

If $P$ is odd, then $G$ is a bisubdivision of $H$ at the edge $e^{*}$. Note that $d_H(u_1)\geq 3$ and $d_H(u_k)\geq 3$.
By Lemma \ref{bisubdivision}, $H$ is a claw-free matching covered graph with $RE(H)\subseteq \{e^{*}\}$.
Let $Q_1=(N_H(u_1)\backslash\{u_k\})\cup \{u_1\}$ and $Q_2=(N_H(u_k)\backslash\{u_1\})\cup \{u_k\}$.
Then $|Q_i|\geq3$ and $Q_i$ is a clique of $G$ and $H$, respectively, $i=1,2$. Since $RE(G)=\emptyset$, Lemma \ref{K5freeK4}(i) implies that $|Q_i|=3$ or 4, $i=1,2$.
Because $u_1$ has exactly one neighbour $u_k$ in $V(H)\backslash Q_1$, by Lemma \ref{K5freeK4}(ii), $e^{*}$ is nonremovable in $H$. Therefore, $RE(H)=\emptyset$.
Then $H$ is a claw-free minimal matching covered graph.
By the induction hypothesis, $H$ is an expansion of a graph in $\mathcal{F}$.

We now show that $e^{*}$ is a ridge of $H$. If this is true, then $G$ is an expansion of a graph in $\mathcal{F}$.
Suppose, to the contrary, that $e^{*}$ is not a ridge of $H$. Recall that each edge of $K_4$ is a ridge. Then $H$ is not isomorphic to $K_4$, and $u_1$ and $u_k$ have a common neighbour in $H$, say $z$. By Lemma \ref{4-degree-2K2}, $H$ is $K_4$-free.
Since $Q_i$ is a clique in $H$, we have $|Q_i|=3$, $i=1,2$. Thus $d_H(u_1)=3$ and $d_H(u_k)=3$.
Let $N_H(u_1)=\{u_k,z,v_1\}$ and $N_H(u_k)=\{u_1,z,v_k\}$.
Then $v_1z,v_kz\in E(H)$ because both $Q_1$ and $Q_2$ are cliques in $H$.
If $v_1\neq v_k$, then $\{u_1,u_k,v_1,v_k\}\subseteq N_H(z)$. By Lemma \ref{4-degree-2K2} again, we see that $d_H(z)=4$ and $H[N_H(z)]$ is the union  of two disjoint $K_2$s. But this contradicts the fact that $H[N_H(z)]$ is connected.  So  $v_1=v_k$. Then $H[\{u_1,u_k,v_1,z\}]$ is isomorphic to $K_4$, contradicting the fact that $H$ is $K_4$-free.
This completes the proof of Theorem \ref{claw-free-mc}.

\section{Proof of Theorem \ref{2-connected-cubic-claw-free}}

Recall that  each  multiple edge of a matching covered graph is a removable edge. In this Section we only consider simple graphs. Note that for a cubic claw-free graph $G$, a ridge of $G$ is exactly a bisimplicial edge of $G$, and vice versa. We first prove the following lemma.

\begin{lemma}\label{cubic-ridge-nonremovable}
Let $G$ be a cubic claw-free matching covered graph. Then each ridge of $G$ is nonremovable in $G$.
\end{lemma}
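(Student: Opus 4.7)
The plan is to exploit the fact that a cubic claw-free graph is extremely rigid near any ridge: each endpoint of a ridge sits in a forced triangle on the other two neighbours. Combined with Lemma \ref{depend-on}, this lets me exhibit a second edge that depends on the ridge, forcing the ridge to be nonremovable.

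First I would fix a ridge $e=uv$ of $G$ and let $N(u)=\{v,u_1,u_2\}$ and $N(v)=\{u,v_1,v_2\}$, which is possible since $G$ is cubic. Because $G$ is claw-free, $G[N(u)]$ contains at least one edge. Since $e$ is a ridge, neither $vu_1$ nor $vu_2$ is an edge of $G$, so the only possible edge in $G[N(u)]$ is $u_1u_2$. Hence $u_1u_2\in E(G)$, and by symmetry $v_1v_2\in E(G)$. Note that $u_1u_2\ne uv$ as edges, since $u_1,u_2\ne u,v$ (using once more that $e$ is a ridge).

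Next I would show that $u_1u_2$ \emph{depends on} $uv$. Since $G$ is matching covered, pick any perfect matching $M$ containing $u_1u_2$; such $M$ exists because $u_1u_2$ is admissible. The vertices $u_1$ and $u_2$ are then saturated by $u_1u_2$, so $u$ must be matched by $M$ to its only remaining neighbour, namely $v$. Therefore $uv\in M$, which is exactly the statement that every perfect matching containing $u_1u_2$ also contains $uv$. By Lemma \ref{depend-on}, $uv$ is nonremovable in $G$, completing the proof.

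I do not anticipate a genuine obstacle; the only subtle point is to be careful that $u_1u_2$ is a genuinely different edge from $uv$ (which the ridge hypothesis guarantees) and that $u_1u_2$ is admissible (which is automatic from matching coverage), so that the dependency is non-vacuous and Lemma \ref{depend-on} may be invoked.
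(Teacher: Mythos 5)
Your proof is correct and takes essentially the same route as the paper: the paper builds the triangle $aa_1a_2$ at one end of the ridge and invokes Lemma \ref{K5freeK4}(ii) with $|V(H)|=3$, whose proof is exactly your dependency argument ($a_1a_2$ depends on the ridge, so by Lemma \ref{depend-on} the ridge is nonremovable), which you have simply inlined. One cosmetic caveat: when $G=K_4$, whose edges are ridges only by the paper's convention, your intermediate claim that $vu_1,vu_2\notin E(G)$ fails, but $u_1u_2\in E(G)$ holds trivially there and the dependency argument goes through unchanged.
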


\begin{proof}
Suppose that $ab$ is a ridge of $G$. Since $G$ is cubic, vertex $a$ has exactly two neighbours $a_1$ and $a_2$ other than $b$. Because $G$ is claw-free, we have $a_1a_2\in E(G)$. It follows that $T=aa_1a_2a$ is a triangle and the vertex $a$ has exactly one neighbour $b$ in $V(G)\backslash V(T)$. By Lemma \ref{K5freeK4}(ii), $ab$ is nonremovable in $G$.
\end{proof}

Kothari et al. \cite{KCLL2020} showed that for a bicritical graph $G$, if $G$ is not a brick, then $G$ has at least two non-cubic vertices. The following is an easy application of this result.

\begin{lemma}\label{cubic-bicritical-brick}
Every cubic bicritical graph is a brick.
\end{lemma}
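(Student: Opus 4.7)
The plan is to apply the Kothari et al.\ result mentioned immediately before the lemma statement by contrapositive. That result says: if a bicritical graph $G$ is not a brick, then $G$ contains at least two vertices whose degree differs from $3$. Equivalently, a bicritical graph in which fewer than two vertices have non-cubic degree must be a brick.

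Let $G$ be a cubic bicritical graph. By the definition of cubic, every vertex of $G$ has degree exactly $3$, so $G$ has zero non-cubic vertices. In particular, $G$ has strictly fewer than two non-cubic vertices, so the hypothesis of the contrapositive of the Kothari et al.\ result is satisfied, and we conclude that $G$ is a brick.

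The only step with any content is invoking the cited theorem; there is no real obstacle, since the condition of being cubic is the strongest possible way to force the non-cubic vertex count down to zero. (As a sanity check, one can also verify directly from the definition that a cubic bicritical graph is $3$-connected: by Proposition~\ref{bicritical-no-barrier} it is free of nontrivial barriers, and any $2$-vertex cut $\{u,v\}$ in a cubic graph with a perfect matching would, via Proposition~\ref{2-barrier}, produce a nontrivial barrier. Combined with bicriticality, Edmonds et al.'s characterization of bricks as $3$-connected bicritical graphs gives the same conclusion, independently of the Kothari et al.\ result.)
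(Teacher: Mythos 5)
Your main argument is exactly the paper's proof: the paper cites the result of Kothari et al.\ that a bicritical non-brick has at least two non-cubic vertices and derives the lemma as an immediate contrapositive, just as you do, so on the core step you and the paper coincide.

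However, your parenthetical ``sanity check'' is not a valid independent derivation. If $G$ is bicritical, then for \emph{every} pair $u,v$ the graph $G-\{u,v\}$ has a perfect matching by definition, so every component of $G-\{u,v\}$ is even; hence a $2$-vertex cut of a bicritical graph never leaves an odd component, Proposition~\ref{2-barrier} never applies, and no $2$-cut is a $2$-barrier (equivalently, Proposition~\ref{bicritical-no-barrier} already tells you bicritical graphs have no nontrivial barriers, so the barrier route can give no information about $2$-\emph{vertex cuts}). Moreover, a cubic graph with a perfect matching can genuinely have a $2$-vertex cut all of whose components are even (such graphs simply fail to be bicritical), so the implication you assert is false even before bicriticality enters. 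Ruling out $2$-cuts in the cubic bicritical case needs real work, which is precisely what the citation encapsulates. If you want an elementary direct proof, one is available: given a $2$-cut $\{u,v\}$ of a cubic bicritical $G$, all components $L_i$ of $G-\{u,v\}$ are even, each $|\partial(V(L_i))|$ is even (parity in a cubic graph) and at least $2$ (by $2$-connectivity), and these quantities sum to at most $d(u)+d(v)=6$, so some component $L$ has $|\partial(V(L))|=2$; $2$-connectivity forces its two edges to be $pu$ and $qv$ with $p,q\in V(L)$ and $p\neq q$; but then $G-\{u,q\}$ contains the odd vertex set $V(L)\setminus\{q\}$ with no edges leaving it, so $G-\{u,q\}$ has no perfect matching, contradicting bicriticality. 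Your main proof stands as written; just drop or repair the parenthetical.
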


The following two lemmas are easily verified.

\begin{lemma}\label{3-connected-3-cut-matching}
In a 3-connected graph, every nontrivial 3-cut is a matching.
\end{lemma}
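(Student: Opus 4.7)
The plan is to argue by contradiction: suppose $\partial(X)$ is a nontrivial $3$-cut of a $3$-connected graph $G$ in which two of the three edges share an endpoint $v$, where we may assume $v\in X$. Call the three cut edges $e_1, e_2, e_3$ with $e_1, e_2$ both incident to $v$, and split into two cases depending on whether $e_3$ is also incident to $v$.

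In the first case, $e_3$ is incident to $v$ as well, so all three cut edges meet at $v$. Since $\partial(X)$ is nontrivial we have $|X|\ge 2$ and $|\overline X|\ge 2$, so both $X\setminus\{v\}$ and $\overline X$ are nonempty. After removing $v$ from $G$ there is no edge between $X\setminus\{v\}$ and $\overline X$ (all such edges lay in $\partial(X)$, and each of them met $v$), so $G-v$ is disconnected. Thus $v$ is a cut vertex of $G$, contradicting $3$-connectedness.

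In the second case, write $e_3=xy$ with $x\in X\setminus\{v\}$ and $y\in\overline X$. I claim $|X|\ge 3$: otherwise $X=\{v,x\}$, and since the cut edges incident to $x$ are only $xy$, we would have $d_G(x)\le 2$ (from $xy$ and possibly the edge $vx$), contradicting $\delta(G)\ge 3$. Hence $X\setminus\{v,x\}$ is nonempty, and by nontriviality $\overline X$ is nonempty. In $G-\{v,x\}$ every edge between $X\setminus\{v,x\}$ and $\overline X$ would have had to lie in $\partial(X)$, but each of $e_1,e_2,e_3$ is incident to $v$ or $x$. Therefore $G-\{v,x\}$ is disconnected, so $\{v,x\}$ is a $2$-vertex cut of $G$, again contradicting $3$-connectedness.

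Both cases yield contradictions, so no two edges of $\partial(X)$ can share an endpoint, i.e.\ $\partial(X)$ is a matching. There is no real obstacle here; the only care needed is the small bookkeeping to rule out the degenerate $|X|=2$ subcase in the second case, which is handled by the minimum-degree consequence of $3$-connectedness.
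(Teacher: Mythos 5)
Your proof is correct and complete; the paper itself gives no argument for this lemma (it is dismissed with ``The following two lemmas are easily verified''), and your two-case contradiction via a cut vertex or a $2$-vertex cut, including the careful exclusion of the $|X|=2$ degeneracy using $\delta(G)\ge 3$, is exactly the standard verification the authors had in mind.
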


\begin{lemma}\label{cubic-cut}
Let $G$ be a 2-connected cubic graph and let $S\subseteq V(G)$. If $|S|$ is odd, then $|\partial(S)|\geq 3$.
\end{lemma}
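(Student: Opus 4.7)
\medskip

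\noindent\textbf{Proof plan for Lemma \ref{cubic-cut}.}
My plan is to combine a parity argument with the 2-connectivity hypothesis; the proof is short and entirely elementary, so I do not anticipate a genuine obstacle, but I will lay out the reasoning explicitly.

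First I would record that any cubic graph has even order, since the sum of the degrees, $3|V(G)|$, must be even. Consequently, if $|S|$ is odd, then $S$ is neither empty nor all of $V(G)$, so $\partial(S)$ is a well-defined edge cut separating $S$ from its nonempty complement $\overline{S}$.

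Next I would count edge-endpoints inside $S$. Since $G$ is cubic,
$$3|S| \;=\; \sum_{v\in S} d_G(v) \;=\; 2|E(G[S])| + |\partial(S)|,$$
and therefore $|\partial(S)| \,=\, 3|S| - 2|E(G[S])|$. Because $|S|$ is odd, $3|S|$ is odd, while $2|E(G[S])|$ is even; hence $|\partial(S)|$ must be odd.

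Finally, since $G$ is 2-connected and $\partial(S)$ is a cut separating the nonempty set $S$ from the nonempty set $\overline{S}$, we have $|\partial(S)|\geq 2$. An odd integer at least $2$ is at least $3$, so $|\partial(S)|\geq 3$, which completes the argument. The only points of care are (i) verifying that $S$ is a proper nonempty subset so that the notion of cut is meaningful, and (ii) invoking 2-connectivity to rule out the residual possibility $|\partial(S)|=1$ left over after the parity step.
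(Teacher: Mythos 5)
Your proof is correct, and it is precisely the standard degree-sum parity argument (odd cut size from $3|S| = 2|E(G[S])| + |\partial(S)|$, plus $|\partial(S)|\geq 2$ from 2-connectivity via $\kappa\leq\kappa'$) that the paper implicitly intends when it states the lemma is ``easily verified'' without proof. You also correctly handle the boundary cases ($S\neq\emptyset$ trivially, and $S\neq V(G)$ since cubic graphs have even order), so there is nothing to add.
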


By Lemma \ref{Tutte}, we can show that every 2-edge-connected cubic graph is  matching covered. Note that a cubic matching covered graph is 2-edge-connected. Thus a cubic graph is matching covered if and only if it is 2-edge-connected. It is known that a cubic graph is 2-connected if and only if it is 2-edge-connected. This implies that a cubic graph is matching covered if and only if it is 2-connected.
To obtain the number of removable edges of cubic claw-free matching covered graphs, we first consider 3-connected graphs.

\begin{lemma}\label{3-connected-cubic-claw-free}
Let $G$ be a 3-connected cubic claw-free graph different from $K_4$ and $\overline{C_6}$. Then $RE(G)$ is the set of all the edges of $G$ that lie in a triangle. Consequently, $G$ has exactly $|V(G)|$ removable edges.
\end{lemma}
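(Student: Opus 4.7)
The plan is to prove two things: (i) every ridge is nonremovable, which is immediate from Lemma~\ref{cubic-ridge-nonremovable}; and (ii) every triangle edge is removable. The count $|RE(G)|=|V(G)|$ will then follow once I show that each vertex of $G$ lies in exactly one triangle. For the latter, claw-freeness forces $G[N(v)]$ to contain at least one edge; if it contained two, then $\{v\}\cup N(v)$ induces a copy of $K_4^-$ whose two degree-two vertices contribute only two edges to $\partial_G(\{v\}\cup N(v))$, contradicting the $3$-edge-connectivity of a $3$-connected cubic graph. Thus the triangles partition $V(G)$ into $|V(G)|/3$ triangles, and there are exactly $|V(G)|$ triangle edges. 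I also record here that $G$ is bicritical: $3$-connectivity forbids $2$-vertex cuts, hence $2$-barriers, and Lemma~\ref{clawfreebicritical-no2barrier} applies, so by Lemma~\ref{cubic-bicritical-brick} $G$ is a brick.

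Now for (ii), fix a triangle $T=\{x,y,z\}$ with external neighbours $x',y',z'$ (pairwise distinct and outside $T$, since each vertex lies in a unique triangle and $G\neq K_4$). Suppose for contradiction that $e=xy$ is nonremovable. The Claim in the proof of Lemma~\ref{K5freeK4} gives a maximal barrier $B$ of $G-e$ whose components are factor-critical, with $x,y$ in distinct components $L_x,L_y$ (otherwise $B$ would be a nontrivial barrier of the bicritical graph $G$) and with every edge of $G[B]$ depending on $e$. Since each $|V(L)|$ is odd, $|\partial_G(V(L))|$ is odd and at least $3$ by Lemma~\ref{cubic-cut}; as only $xy$ crosses between $L_x$ and $L_y$, counting edge-endpoints at $B$ yields
\[
2 \;=\; 2|E(G[B])| \;+\; \sum_{L}\bigl(|\partial_G(V(L))|-3\bigr),
\]
leaving two possibilities: (A) $|E(G[B])|=1$ with every component cut of size $3$; or (B) $B$ is stable, exactly one component cut has size $5$, and the rest have size $3$.

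To finish, I would rule out both cases. By Lemma~\ref{3-connected-3-cut-matching}, a nontrivial $3$-cut is a matching; so $L_x$ being nontrivial would place two of $\{xy, xz, xx'\}$ into $\partial_G(V(L_x))$ and violate the matching property. Hence $L_x=\{x\}$, symmetrically $L_y=\{y\}$, and $\{z,x',y'\}\subseteq B$. In Case~(A), the unique internal edge of $B$ cannot be a ridge (its triangle would produce a nontrivial component with two $3$-cut edges at a common $B$-endpoint), so it is a triangle edge $b_1b_2$ whose third triangle vertex $c$ must lie in a trivial component $\{c\}$ with $c$'s external neighbour also in $B$. Any additional $B$-vertex is then excluded because its two triangle-mates would have to lie together in a common component $L_m$ that again fails the $3$-cut matching condition. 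The only surviving configuration is $B=\{z,x',y'\}$ with $x'y'\in E(G)$ and a single trivial component $\{w\}$ adjacent to all of $\{z,x',y'\}$ — precisely the prism $\overline{C_6}$, contradicting the hypothesis. Case~(B) is handled by a parallel analysis on the single size-$5$ cut, again collapsing the graph to $\overline{C_6}$. The main obstacle is this last structural chase: keeping the forced membership of $z,x',y'$ in $B$, the matching constraint on nontrivial $3$-cuts, and claw-freeness at every $B$-vertex in view simultaneously, so as to eliminate every configuration except the excluded $\overline{C_6}$.
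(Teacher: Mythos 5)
Your overall strategy coincides with the paper's: invoke the Claim from the proof of Lemma~\ref{K5freeK4} to get a maximal barrier $B$ of $G-xy$ with $x,y$ in distinct components, count edge-ends at $B$, use Lemma~\ref{3-connected-3-cut-matching} to trivialize components, and chase the surviving configuration down to $\overline{C_6}$ (your triangle-partition argument via a $2$-edge cut rather than the paper's $2$-vertex cut, and the bicriticality remark, are harmless variants). The one genuine gap is your Case~(B). The barrier $B$ produced by the Claim is, by its construction (Proposition~\ref{prop2}(i) plus maximality), chosen to contain \emph{both ends of the inadmissible edge} $f$ of $G-xy$; hence $f\in E(G[B])$, so $|E(G[B])|\geq 1$ and your identity $2=2|E(G[B])|+\sum_{L}\bigl(|\partial_G(V(L))|-3\bigr)$ forces Case~(A) outright: Case~(B) is vacuous. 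This is exactly how the paper proceeds, writing $|\partial_G(B)|\leq 3|B|-2$ \emph{because} $f$ has its ends in $B$, and so it never meets a stable barrier. You, however, used only the statement of the Claim (which asserts merely that every edge with both ends in $B$ depends on $xy$ --- vacuously true when $B$ is stable), left (B) live, and dismissed it with ``a parallel analysis \ldots{} again collapsing the graph to $\overline{C_6}$.'' As written that is an unproved assertion, and it is not a routine rerun of (A): with one component cut of size $5$, Lemma~\ref{3-connected-3-cut-matching} no longer trivializes that component; in particular $L_x$ itself could be the nontrivial $5$-cut component, so even your opening move ``$L_x=\{x\}$'' can fail there. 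Either add the one-line observation $f\in E(G[B])$ (the intended fix), or genuinely carry out (B).

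The rest of your Case~(A) chase is sound and matches the paper's two subcases (there organized by whether $z$ is an end of $f$), with two points worth tightening. First, the parenthetical ``its triangle would produce \ldots'' justifying that $f$ is not a ridge is garbled: the relevant triangle is the unique triangle through an \emph{end} of $f$ not containing $f$, whose two mates $p,q$ satisfy $pq\in E(G)\setminus\{xy\}$ and hence lie in one nontrivial component, violating the matching property of its $3$-cut. Second, your exclusion of ``additional $B$-vertices'' via triangle-mates must explicitly exempt $z$, whose mates $x,y$ are split precisely by the deleted edge $xy$ (this exemption is also what forces $f=x'y'$ rather than, say, $f=zz'$); your sketch uses this implicitly, and it is the same mechanism by which the paper's first subcase terminates at the excluded $K_4$.
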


\begin{proof}
Note that $G$ is matching covered. Since $G$ is a cubic claw-free graph different from $K_4$,  each vertex of $G$ lies in one or two triangles.
If there exists a vertex of $G$ that lies in exactly two triangles,
since $G$ is cubic, $G$ has a 2-vertex cut, contradicting the fact that $G$ is 3-connected. Therefore, each vertex of $G$ lies in exactly one triangle of $G$.
\vspace{1.5mm}

\textbf{Claim.}  Each edge that lies in a triangle of $G$ is removable in $G$.

We prove this claim by contradiction.  Let $T=xyzx$ be a triangle of $G$. Suppose that an edge of $T$, say $xy$, is nonremovable in $G$. Then $G-xy$ is not matching covered, so it has an inadmissible edge $f$. By Proposition \ref{prop2}(i), $G-xy$ has a barrier that contains the two ends of $f$. Let $B$ be such a maximal barrier. By Proposition \ref{prop2}(ii), each component of $G-xy-B$ is factor-critical. Since $f$ is admissible in $G$, $xy$ has its ends in distinct components of $G-xy-B$. Let $L_1$ and $L_2$ be the two components of $G-xy-B$ such that $x\in V(L_1)$ and $y\in V(L_2)$.
Because $G$ is cubic and $f$ has its ends in $B$, we have $|\partial_G(B)|\leq3|B|-2$. Also, for each component $J$ of $G-xy-B$, by Lemma \ref{cubic-cut}, we see that $|\partial_G(J)|\geq3$. Then $|\partial_G(B)|\geq3|B|-2$, so $|\partial_G(B)|=3|B|-2$. This implies that $E(G[B])=\{f\}$  and $|\partial(J)|=3$ for each component $J$ of $G-xy-B$. Since $xz,yz\in E(G)$, we have $z\in B$. By Lemma \ref{3-connected-3-cut-matching}, both $L_1$ and $L_2$ are trivial. According to whether $z$ is an end of $f$ or not, we consider the following two cases.

First suppose that $z$ is an end of $f$. If $|B|\geq3$, there exists a vertex $u$ in $B$ such that $u$ is not an end of $f$. Note that the vertex $u$ lies in a triangle of $G$, say $uvwu$. Then either $vw=xy$ or $vw$ lies in a component of $G-xy-B$ because $E(G[B])=\{f\}$.
For the former case, $x$ lies in two triangles $xyz$ and $xyu$ of $G$, a contradiction.
For the latter case, let $L$ be the  component of $G-xy-B$ that contains $vw$. Then $\partial_G(V(L))$ is not a matching, contradicting Lemma \ref{3-connected-3-cut-matching}. Therefore, $|B|=2$. This implies that $G$ is $K_4$, a contradiction.

Now suppose that $z$ is not an end of $f$. Let $f=u'v'$. Applying a similar argument as in the above case, we can show that $B$ exactly contains three vertices, $z,u'$, and $v'$. Then $G-xy-B$ has exactly one component, say $L_3$, other than $L_1$ and $L_2$.
Since each vertex of $G$ lies in exactly one triangle of $G$, $x$ and $y$ have no common neighbours other than $z$.
Assume, without loss of generality, that $xu',yv'\in E(G)$. Since $G$ is cubic, $u'$ has a neighbour in $L_3$, say $w'$. Then $v'w'\in E(G)$ because $G$ is claw-free. By Lemma \ref{3-connected-3-cut-matching}, $L_3$ is trivial. So $zw'\in E(H)$. This implies that $G$ is $\overline{C_6}$, a contradiction. The claim holds.
\vspace{1.5mm}

By Lemma \ref{cubic-ridge-nonremovable},  each ridge of $G$ is nonremovable in $G$. By the above Claim,  $RE(G)$ is the set of all the edges of $G$ that lie in a triangle.
Since each vertex of $G$ lies in exactly one triangle of $G$, we have $|RE(G)|=\frac{|V(G)|}{3}\times3=|V(G)|$. The result holds.
\end{proof}

Plummer \cite{Plummer94} showed that every 3-connected claw-free graph with an even number of vertices is a brick. Recall that a graph is a brick if and only if it is 3-connected and bicritical. It follows that every 3-connected cubic claw-free graph is a cubic claw-free brick, and vice versa.
From Lemma \ref{3-connected-cubic-claw-free}, we see that
every cubic claw-free brick $G$ different from $K_4$ and $\overline{C_6 }$ has exactly $|V(G)|$ removable edges.

\vspace{2.5mm}
{\it Proof of Theorem \ref{2-connected-cubic-claw-free}.}
Let $G$ be a cubic claw-free matching covered graph.
Recall that $b^{*}(G)$ denotes the number of the bricks of $G$ each of which is different from $K_4$ and $\overline{C_6}$.
If $b^{*}(G)\geq1$, the number of the vertices of the $b^{*}(G)$ bricks is denoted by $n_1, n_2,\ldots, n_{b^{*}(G)}$, respectively. To complete the proof of Theorem \ref{2-connected-cubic-claw-free}, we first prove the following claim.
\vspace{1.5mm}

\textbf{Claim.}  If $G$ is minimal matching covered, then $b^{*}(G)=0$; otherwise, $b^{*}(G)\geq1$ and $|RE(G)|=\sum_{i=1}^{b^{*}(G)}n_i$.

We shall prove the claim by induction on $|V(G)|$.
Assume first that $G$ is a brick. Then $G$ is 3-connected. Recall that Lov\'asz \cite{Lovasz87} showed that the only bricks that are minimal matching covered are $K_4$ and $\overline{C_6}$.
If $G$ is minimal matching covered, then $G$ is  $K_4$ or $\overline{C_6}$. So $b^{*}(G)=0$. If $G$ is not minimal matching covered, then $G$ is different from $K_4$ and $\overline{C_6}$. So $b^{*}(G)=1$ and $n_1=|V(G)|$.
By Lemma \ref{3-connected-cubic-claw-free}, we have $|RE(G)|=|V(G)|=n_1$. So the result holds when $G$ is a brick.

Now assume that $G$  is not a brick. Since $G$ is cubic, by Lemma \ref{cubic-bicritical-brick}, $G$ is not bicritical. By Lemma \ref{clawfreebicritical-no2barrier}, $G$ has a 2-barrier $\{u,v\}$. By Proposition \ref{mc-B-stableset}, we have $uv\notin E(G)$ and $G-\{u,v\}$ has no even components. It follows that $G-\{u,v\}$ has exactly two odd components, say $L_1$ and $L_2$.
Moreover, we have $|V(L_1)|\geq3$ and $|V(L_2)|\geq3$ because $G$ is simple and cubic.
Note that $G$ is 2-connected. Then each of $u$ and $v$ has at least one neighbour in $L_1$ and at least one neighbour in $L_2$. By Lemma \ref{cubic-cut}, we have $|\partial(V(L_1))|\geq3$ and $|\partial(V(L_2))|\geq3$. On the other hand, since $G$ is cubic, we have $|\partial(V(L_1))|+|\partial(V(L_2))|=d(u)+d(v)=6$. So $|\partial(V(L_1))|=|\partial(V(L_2))|=3$.
Assume, without loss of generality, that $u$ has exactly two neighbours $u_1$ and $u_2$ in $L_1$, and $v$ has exactly one neighbour $v_3$ in $L_1$.
Then $u$ has exactly one neighbour $u_3$ in $L_2$, and $v$ has exactly two neighbours  in $L_2$.
Since $G$ is claw-free, we have $u_1u_2\in E(G)$.
If $v_3$ is one of $u_1$ and $u_2$, say $u_2$, then $u_1$ is a cut vertex of $G$, a contradiction. So $v_3\notin\{u_1,u_2\}$.
Let $G_i=G\{V(L_i),w_i\}$, $i=1,2$.

Next we shall prove that $RE(G)=RE(G_1)\cup RE(G_2)$.
We first assert that $w_1v_3$ is a ridge of $G_1$.
Since $G$ is cubic, $v_3$ has exactly two neighbours $v_{31}$ and $v_{32}$ in $L_1$ other than $v$. Then $v_{31}v_{32}\in E(G)$ because $G$ is claw-free. We see that $|\{u_1,u_2\}\cap\{v_{31},v_{32}\}|\neq1$ because $G$ is cubic.
If $\{u_1,u_2\}\cap\{v_{31},v_{32}\}=\emptyset$, then $w_1v_3$ is a ridge of $G_1$.
If $\{u_1,u_2\}=\{v_{31},v_{32}\}$, then $G_1$ is isomorphic to $K_4$. Recall that each edge of $K_4$ is a ridge. The assertion holds.
By symmetry, we can show that $w_2u_3$ is a ridge of $G_2$. Recall that for a cubic claw-free graph, a ridge is a bisimplicial edge. Then $G$ is isomorphic to $(G_1\oplus G_2)_{(w_1v_3,u_3w_2)}$. Since $G$ is cubic, both $G_1$ and $G_2$ are cubic. By Lemma \ref{four-operation-property}, both $G_1$ and $G_2$ are claw-free matching covered graphs, and $RE(G)=(RE(G_1)\cup RE(G_2))\backslash\{w_1v_3,w_2u_3\}$.
Moreover, by Lemma \ref{cubic-ridge-nonremovable},
$w_1v_3$ is nonremovable in $G_1$ and $w_2u_3$ is nonremovable in $G_2$.
Therefore, we have $RE(G)=RE(G_1)\cup RE(G_2)$.

Note that $\partial_G(V(L_i))$ is a barrier cut of $G$, $i=1,2$.
Then $G_1$ and $G_2$ can arise during the procedure of a tight cut decomposition of $G$.
Moreover, the union of the lists of bricks of $G_1$ and $G_2$ is the same as the list of bricks of $G$, using the fact that any two tight cut decompositions of a matching covered graph yield the same list of bricks and braces. In particular, we have $b^{*}(G)=b^{*}(G_1)+b^{*}(G_2)$.
If $G$ is minimal matching covered, then $RE(G)=\emptyset$.
So $RE(G_1)=RE(G_2)=\emptyset$, that is, both $G_1$ and $G_2$ are minimal matching covered.
By the induction hypothesis, we have $b^{*}(G_1)=b^{*}(G_2)=0$. So $b^{*}(G)=b^{*}(G_1)+b^{*}(G_2)=0$.
If $G$ is not minimal matching covered, then $RE(G)\neq\emptyset$. Since $RE(G)=RE(G_1)\cup RE(G_2)$, one of $G_1$ and $G_2$, say $G_1$, has removable edges. So $G_1$ is not minimal matching covered. If $G_2$ is not minimal matching covered, by the induction hypothesis, we have
$|RE(G_1)|+|RE(G_2)|=\sum_{i=1}^{b^{*}(G)}n_i$. So $|RE(G)|=|RE(G_1)|+|RE(G_2)|=\sum_{i=1}^{b^{*}(G)}n_i$.
If $G_2$ is minimal matching covered, then $RE(G_2)=\emptyset$. Moreover, by the induction hypothesis, we have $b^{*}(G_2)=0$.
Again by the induction hypothesis, we have $|RE(G_1)|+|RE(G_2)|=|RE(G_1)|=\sum_{i=1}^{b^{*}(G)}n_i$. So $|RE(G)|=|RE(G_1)|+|RE(G_2)|=\sum_{i=1}^{b^{*}(G)}n_i$.
This proves the claim.
\vspace{1.5mm}

From the proof of the above Claim, we see that every brick of $G$ is cubic claw-free. Now assume that $G$ is not minimal matching covered.

(i) By the above Claim, (i) holds.

To show (ii), we first assert that every cubic claw-free brick $H$ different from $K_4$ and $\overline{C_6 }$ has at least 12 vertices.
Since $H$ is 3-connected and distinct from $K_4$,
each vertex of $H$ lies in exactly one triangle of $H$.
Consequently,  $|V(H)|$ is a multiple of 3.  Since $|V(H)|$ is even and $H$ is distinct from $\overline{C_6 }$, we have $|V(H)|\geq12$. The assertion holds.

Since $G$ is not minimal matching covered,
by (i), we have $b^{*}(G)\geq1$. So $G$ has a brick $G_1$ which is different from $K_4$ and $\overline{C_6 }$. Again by (i), we have $|RE(G)|\geq|V(G_1)|$. Recall that every brick of $G$ is cubic claw-free, so is $G_1$.
By the above claim, we have $|V(G_1)|\geq12$, so $|RE(G)|\geq|V(G_1)|\geq12$.

We  next assert  that the above bound  is sharp for an infinite number of cubic claw-free matching covered graphs. Let $H_1$ be the graph obtained from $K_4$ by replacing each vertex $v$ of $K_4$ with a triangle and joining the three neighbours of $v$ to the three vertices in the triangle, respectively. Then  $H_1$ is a 3-connected cubic claw-free graph. By Lemma \ref{3-connected-cubic-claw-free}, we have $|RE(H_1)|=|V(H_1)|=12$.
Let $H_2$ be a cubic graph such that $H_2\in\mathcal{F}$.
By Theorem \ref{claw-free-mc}, $H_2$ is a cubic claw-free minimal matching covered graph, so  $RE(H_2)=\emptyset$.  Let $e_i$ be a ridge of $H_i$, $i=1,2$.
By Lemma \ref{cubic-ridge-nonremovable}, $e_i$ is nonremovable in $H_i$. Note that $e_i$ is a bisimplicial edge of $H_i$.
Let $H=(H_1\oplus H_2)_{(e_1,e_2)}$. Then $H$ is cubic because both $H_1$ and $H_2$ are cubic. Moreover, by Lemma \ref{four-operation-property}, $H$ is a claw-free matching covered graph, and
$$|RE(H)|=|(RE(H_1)\cup RE(H_2))\backslash\{e_1,e_2\}|=|RE(H_1)|=12.$$
The assertion holds.
Note that $|V(H_2)|$ can be infinitely large. The above bound is the best possible.
This completes the proof of Theorem \ref{2-connected-cubic-claw-free}.  $\hfill\square$

\section*{Acknowledgements}
This work is supported by the National Natural Science Foundation of China under grant
numbers 12171440, 12371318 and 12371361.

\end{document}